\newtheorem{theorem}{Theorem}[section]
\newtheorem{lemma}{Lemma}[section]
\newtheorem{corollary}{Corollary}[section]
\newtheorem{definition}{Definition}[section]
\newtheorem{remark}{Remark}[section]
\def\cW{{\mathcal W}}
\def\cF{{\mathcal F}}
\title{On invariant holonomies between centers}
\author{Radu Saghin}
\date{\today}
\thanks{partially supported by ANID}
\address{Instituto de Matem\'atica, Pontificia Universidad Cat\'olica de Valpara\'iso,
Blanco Viel 596, Cerro Bar\'on, Valpara\'iso-Chile.}
\email{radu.saghin\@@pucv.cl}
\begin{document}

\begin{abstract}
We prove that for $C^{1+\theta}$, $\theta$-bunched, dynamically coherent partially hyperbolic diffeomorphisms, the stable and unstable holonomies between center leaves are $C^1$ and the derivative depends continuously on the points and on the map. Also for $C^{1+\theta}$, $\theta$-bunched partially hyperbolic diffeomorphism, the derivative cocycle restricted to the center bundle has invariant continuous holonomies which depend continuously on the map. This generalizes previous results by Pugh-Shub-Wilkinson, Burns-Wilkinson, Brown, Obata, Avila-Santamaria-Viana, Marin.
\end{abstract}

\maketitle

\tableofcontents

\setcounter{tocdepth}{1} \tableofcontents

\section{Introduccion}

Let $M$ be a compact smooth Riemannian manifold.

\begin{definition}\label{def:phd}
A diffeomorphism $f: M\to M$ of the compact Riemannian manifold $M$ is called \emph{partially hyperbolic} if the tangent bundle admits a continuous $Df$-invariant splitting $TM = E^s \oplus E^c\oplus E^u$ such that there exist continuous functions $0<\lambda_s(x)<\lambda_c^-(x)\leq\lambda_c^+(x)<\lambda_u(x)$, with $\lambda_s(x)<1<\lambda_u(x)$, satisfying the following conditions:
\begin{itemize}
\item[(1)] $\|Df(x)v^s\|\leq\lambda_s(x)$,
\item[(2)] $\lambda_c^-(x) \leq \|Df(x) v^c\| \leq \lambda_c^+(x)$,
\item[(3)] $\| Df(x) v^u\|\geq\lambda_u(x)$,
\end{itemize}
for every $x\in M$ and unit vectors $v^* \in E^*(x)$($* = s, c, u$).
\end{definition}

$E^s$ and $E^u$ are uniquely integrable, generating the stable and unstable foliations $\cW^s$ and $\cW^u$. A partially hyperbolic diffeomorphism is called \emph{dynamically coherent} if there exist invariant foliations $\cW^{cs}$ and $\cW^{cu}$ tangent to $E^{cs} = E^c \oplus E^s$ and $E^{cu} = E^c \oplus E^u$. In this case $\cW^{cs}$ is subfoliated by the stable and central foliations $\cW^s$ and $\cW^c$, while $\cW^{cu}$ is subfoliated by the unstable and center foliations $\cW^u$ and $\cW^c$.

\begin{definition}
A partially hyperbolic diffeomorphism is $\theta$-unstable bunched, $\theta>0$ if
\begin{equation}\label{global}
\lambda_u^{\theta}>\frac{\lambda_c^+}{\lambda_c^-}.
\end{equation}
Similarly we define $\theta$-stable bunching if $\lambda_s^{\theta}<\frac{\lambda_c^-}{\lambda_c^+}$, and $\theta$-bunched means both stable and unstable bunched.
\end{definition}

Given $f:M\rightarrow M$ partially hyperbolic and dynamically coherent, $p\in M$, $q\in\cW^u(x,f)$, we can define the unstable holonomy $h_{p,q,f}^u:\cW^c_{loc}(p)\rightarrow\cW^c(q)$ between the center leaves. We are addressing the question of differentiability of the holonomy along the center leaves, and the continuity of the derivative with respect to the points and the map.

\begin{theorem}\label{th:t1}
Suppose that $f$ is a $C^{1+\theta}$ partially hyperbolic diffeomorphism which is dynamically coherent and $\theta$-unstable bunched, $\theta\in(0,1]$. Then $h^u_{p,q,f}$ is $C^1$ and its derivative depends continuously on $f,p,q$ with $q\in\cW^u(p)$. A similar statement holds for the stable holonomy under the $\theta$-stable bunching condition.
\end{theorem}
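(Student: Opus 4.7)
The plan is to construct the derivative of $h^u_{p,q,f}$ along the center as an explicit limit of natural linear maps $E^c(p) \to E^c(q)$, built by iterating the center derivative cocycle, and then to verify that this limit really is the derivative. The starting observation is that for $q \in \cW^u(p)$ one has the backward contraction $d(f^{-n}(p), f^{-n}(q)) \leq C \lambda_u^{-n}$ along the unstable leaf, together with the conjugation identity
\[
h^u_{p,q,f} = f^n \circ h^u_{f^{-n}(p), f^{-n}(q), f} \circ f^{-n},
\]
which allows one to study the holonomy between extremely close points and then push the estimates forward by $Df^n$.

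Write $p_n = f^{-n}(p)$, $q_n = f^{-n}(q)$. Using a local trivialization of the continuous bundle $E^c$, choose a natural identification $I_n: E^c(p_n) \to E^c(q_n)$ with $\|I_n - \mathrm{id}\|$ of order $d(p_n,q_n)^\alpha$ for some $\alpha>0$ (e.g.\ parallel transport along the short path inside $\cW^u(p_n,q_n)$, exploiting the Hölder regularity of $E^c$ transverse to $\cW^u$), and set
\[
L_n := Df^n|_{E^c(q_n)} \circ I_n \circ (Df^n|_{E^c(p_n)})^{-1} \colon E^c(p) \longrightarrow E^c(q).
\]
The key estimate is that the $L_n$ form a Cauchy sequence: $L_{n+1} L_n^{-1} - \mathrm{id}$ is essentially controlled by the difference of $Df|_{E^c}$ at $p_n$ and $q_n$, which by $C^{1+\theta}$ regularity is $O(d(p_n,q_n)^\theta) = O(\lambda_u^{-n\theta})$, conjugated by the center cocycle whose distortion grows like $(\lambda_c^+/\lambda_c^-)^n$. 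The $\theta$-unstable bunching hypothesis $\lambda_u^\theta > \lambda_c^+/\lambda_c^-$ makes this geometrically summable, so $L := \lim_n L_n$ exists.

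The heart of the argument is to show that $L$ is genuinely $Dh^u_{p,q,f}$ at $p$ along $\cW^c(p)$. Working in local $\cW^{cu}$-charts around $p_n$ that flatten $\cW^u$ into affine fibers, $h^u_{p_n,q_n,f}$ is on a $\delta_n$-neighborhood a small perturbation of the model identification $I_n$, with a nonlinear error of order $\delta_n^{1+\theta}$ coming from the $\theta$-Hölder transverse variation of the unstable leaves inside $\cW^{cu}$. Conjugating by $f^n$ sends the $\delta_n$-neighborhood to a neighborhood of comparable size near $p$, and both the scale $\delta_n$ and the error bound transform via the center cocycle; the bunching condition is exactly what permits a diagonal choice $\delta_n \to 0$ for which the conjugated error remains $o(\|x-p\|)$. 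This gives Fréchet differentiability at $p$, and applying the same construction at nearby base points on $\cW^c(p)$ with their holonomy images yields $C^1$ regularity along $\cW^c$. Continuity of $L$ in $(f,p,q)$ follows because each $L_n$ depends continuously on these parameters (continuity of $Df$, of the invariant bundles, and of $\cW^u$ under $C^1$-perturbations) and the Cauchy bound $\|L_{n+1}-L_n\| \leq C r^n$ with $r = (\lambda_c^+/\lambda_c^-)/\lambda_u^\theta < 1$ is uniform on compact neighborhoods, since bunching is open and $\lambda_u, \lambda_c^\pm$ are continuous. The stable case reduces to this one by replacing $f$ with $f^{-1}$.

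The main obstacle will be the third step: producing quantitatively controlled first-order approximations of $h^u_{p_n,q_n,f}$ by the identifications $I_n$, uniformly in $n$, and then assembling them through conjugation by $f^n$ into a coherent first-order estimate of $h^u_{p,q,f}$ at $p$. This is where the bunching hypothesis must be used quantitatively, and where the argument has to be kept strictly within the $C^{1+\theta}$ regularity class, since one does not have the $C^2$ distortion estimates that simplify the classical Pugh--Shub--Wilkinson treatment.
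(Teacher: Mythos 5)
Your overall strategy (pull back along $\cW^u$, construct a candidate derivative as a limit of conjugated identifications, verify it is the derivative) is in the same spirit as the paper, but there are two genuine gaps, and they are precisely the two places where the paper has to do all its work.

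First, your Cauchy estimate is not justified as written. You assert that $L_{n+1}L_n^{-1}-\mathrm{id}$ is ``controlled by the difference of $Df|_{E^c}$ at $p_n$ and $q_n$,'' hence $O(d(p_n,q_n)^\theta)$. But $Df|_{E^c(p_n)}$ and $Df|_{E^c(q_n)}$ have different domains and ranges, so comparing them necessarily involves the identification $I_n$ at both ends, and $\|I_n-\mathrm{id}\|$ is only $O(d(p_n,q_n)^\alpha)$ where $\alpha$ is the H\"older exponent of $E^c$, which in general is \emph{strictly less than} $\theta$. With parallel transport or any symmetric identification, the resulting error in $B_{n+1}I_{n+1}A_{n+1}^{-1}-I_n$ is $O(d_n^\alpha)$, and the geometric series requires $\lambda_u^\alpha > \lambda_c^+/\lambda_c^-$ — i.e.\ an extra pinching hypothesis, which is exactly what the paper is trying to remove. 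The paper avoids this by the specific choice $I_n=p^{E^{su}_{p_n}}_{E^c_{p_n},E^c_{q_n}}$ (projection along $E^{su}$ at $p_n$, used throughout), which satisfies a commutation identity with $Df(p_n)$ that makes the problematic $\alpha$-H\"older terms cancel exactly, leaving only $(Df(q_{n+1})-Df(p_{n+1}))$, which is genuinely $O(d^\theta)$. You do not identify this cancellation, and your suggested parallel transport does not produce it.

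Second, your step 3 — showing that the limit is in fact the derivative — is where the real difficulty lies, and your sketch does not close it. You propose to work in ``local $\cW^{cu}$-charts that flatten $\cW^u$ into affine fibers'' with an error $O(\delta^{1+\theta})$ coming from ``$\theta$-H\"older transverse variation of $\cW^u$ inside $\cW^{cu}$.'' But the leaves of $\cW^{cu}$ themselves are only $C^{1+\alpha}$ with $\alpha<\theta$, so no such chart with $C^{1+\theta}$ control exists, and the error is only $O(\delta^{1+\alpha})$, which is again not summable under $\theta$-bunching alone. This is precisely why the paper constructs \emph{uniformly smooth approximations} of $\cW^{cu}$ together with a smooth subfoliation close to $\cW^u$ (its Section 2.2), transports these approximations by $f^n$ using the quantitative Lemma on how the $\theta$-H\"older modulus of holonomies transforms under iteration, and then upgrades the resulting uniform Lipschitz control to genuine differentiability via the Lipschitz-jet machinery of Hirsch–Pugh–Shub. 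Without some replacement for the nonexistent $C^{1+\theta}$ charts of $\cW^{cu}$, and without a mechanism (like jets plus the invariant section theorem) to pass from a Lipschitz bound to differentiability, the argument does not close.
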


\begin{remark}
The continuity means that if $f_n$ is inside a $C^{1+\theta}$ neighborhood of $f$ and converges to $f$ in the $C^{1}$ topology, $x_n$ converges to $x$, $y_n\in\cW^u_{loc}(x_n)$ and $y_n$ converges to $y$, then $Dh^u_{x_n,y_n,f_n}$ converges to $Dh^u_{x,y,f}$.
\end{remark}

Even if $f$ is not dynamical coherent, one can always construct fake foliations which are locally invariant under $f$ and are almost tangent to the invariant bundles (see \cite{BW10} for example). The fake foliations are a fundamental tool for  the study of ergodic properties of partially hyperbolic diffeomorphisms.

\begin{corollary}\label{cor}
Suppose that $f$ is a $C^{1+\theta}$ partially hyperbolic diffeomorphism which is $\theta$-unstable bunched, $\theta\in(0,1]$. Then the fake unstable holonomy between fake center leaves is uniformly $C^1$ (Lipschitz). A similar statement holds for the stable holonomy under the $\theta$-stable bunching condition.
\end{corollary}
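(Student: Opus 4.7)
The plan is to reduce the corollary to Theorem \ref{th:t1} via the Burns-Wilkinson fake foliation construction \cite{BW10}. Recall that this construction associates to each point $p\in M$ a locally defined modification $\tilde f_p$ of $f$, obtained by lifting $f$ to the tangent space $T_pM$ via the exponential chart and then modifying it outside a small ball so as to produce a globally dynamically coherent partially hyperbolic diffeomorphism on a compact ambient manifold (obtained by compactifying $\mathbb R^d$ to match a linear model near infinity). The fake unstable and center foliations at $p$ are by definition the restriction, to a small ball around $p$, of the $\tilde f_p$-invariant unstable and center foliations.

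First I would verify that this construction can be carried out so that each $\tilde f_p$ is $C^{1+\theta}$, dynamically coherent, and $\theta$-unstable bunched, with constants chosen uniformly in $p\in M$. This uses the uniform bunching and $C^{1+\theta}$ regularity of $f$ on the compact manifold $M$, together with the fact that the Burns-Wilkinson modification perturbs $f$ in a controlled way at each prescribed scale, losing only an arbitrarily small amount in the bunching constants. Then Theorem \ref{th:t1} applies to each $\tilde f_p$, showing that the unstable holonomy between center leaves of $\tilde f_p$ is $C^1$, with derivative depending continuously on the points. Restricting to the small ball around $p$ where $\tilde f_p$ coincides with $f$, this holonomy is exactly the fake unstable holonomy between fake center leaves of $f$, so the latter is $C^1$.

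For the uniform Lipschitz bound, I would combine two ingredients: by Theorem \ref{th:t1} the derivative of the holonomy depends continuously on the triple $(\tilde f_p, x, y)$; and by construction the map $p\mapsto \tilde f_p$ is continuous in the $C^{1+\theta}$ topology on a uniform neighborhood, with the local scale on which fake holonomies are defined also uniform in $p$. Compactness of $M$ then forces the family $\{Dh^u_{x,y,\tilde f_p}\}$ to be uniformly bounded on its relevant parameter domain, which is precisely the uniform $C^1$ bound on the fake holonomy. A uniform derivative bound translates into a uniform Lipschitz constant via the mean value inequality along center plaques.

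The main obstacle I anticipate is the compatibility between the Burns-Wilkinson construction and the hypotheses of Theorem \ref{th:t1}: one must ensure (a) that $\tilde f_p$ is realized as a dynamically coherent partially hyperbolic diffeomorphism of a \emph{compact} manifold while preserving the $\theta$-unstable bunching uniformly in $p$, and (b) that the dependence $p\mapsto \tilde f_p$ is sufficiently regular for the continuous-in-$f$ statement of Theorem \ref{th:t1} to yield uniform bounds after invoking compactness of $M$. The stable case is handled by the symmetric argument under the $\theta$-stable bunching hypothesis.
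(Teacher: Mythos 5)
Your approach is to reduce the corollary to Theorem~\ref{th:t1} by realizing the Burns--Wilkinson model diffeomorphism $\tilde f_p$ as a dynamically coherent partially hyperbolic map of a \emph{compact} ambient manifold and then invoking the continuity-in-$f$ statement of Theorem~\ref{th:t1} plus compactness of $M$. This is not the route the paper takes, and I think it has a genuine gap. The Burns--Wilkinson construction does not produce a diffeomorphism of a compact manifold: it produces, for each $p$, a diffeomorphism of $T_pM\cong\mathbb R^d$ that agrees with the exponential lift of $f$ near the origin and with the linear map $Df(p)$ outside a ball, and the fake foliations come from invariant-manifold theory applied directly on this non-compact space (with bounds uniform in $p$ because $M$ is compact). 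There is no natural compactification ``matching the linear model near infinity'': a one-point or radial compactification destroys the uniform rates and the dominated splitting at infinity, and you cannot expect the result to be partially hyperbolic, let alone $\theta$-bunched. Moreover, even granting some compactification, each $\tilde f_p$ would live on a \emph{different} compact manifold, so the phrase ``$p\mapsto\tilde f_p$ is continuous in the $C^{1+\theta}$ topology'' has no direct meaning and Theorem~\ref{th:t1}'s continuity-in-the-map (which is for maps of one fixed $M$, within a $C^{1+\theta}$ neighborhood) cannot be applied across the family to extract uniform bounds.

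The paper's actual proof does not reduce to Theorem~\ref{th:t1}; it re-runs the same argument in the fake setting. The ambient space is the disjoint union $\bigsqcup_{p\in M}\mathbb R^d$, which is non-compact, but all the estimates entering Lemma~\ref{le:cf}, Lemma~\ref{le:main}, and the invariant-section arguments are uniform in $p$, so nothing in the proof uses compactness of the total space. In addition, the most technical part of the proof of Theorem~\ref{th:t1}, namely the smooth uniform approximation of $(\cW^{cu},\cW^u)$ in Subsection~\ref{ss:app}, becomes trivial here: the fake center-unstable leaf and its fake unstable subfoliation can be replaced directly by the $cu$-subspace through the origin and its foliation by parallel $u$-subspaces, which are already linear, hence smooth with uniformly bounded derivatives. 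If you want to salvage your plan, the fix is not to compactify but to notice that none of the steps of the proof of Theorem~\ref{th:t1} actually require compactness once the bounds are uniform, and to observe that the smooth approximation is free in the fake setting.
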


Independently if $f$ is dynamically coherent or not, one can have invariant holonomies of the continuous cocycle defined by $Df|_{E^c}$.

\begin{definition}
Let $\mathcal E$ be a continuous vector bundle over $M$ and $F:\mathcal E\rightarrow \mathcal E$ a continuous linear cocycle over the partially hyperbolic diffeomorphism $f:M\rightarrow M$. An {\it invariant unstable holonomy for $F$} is a family of linear maps $\{H^u_{x,y}:\mathcal E(x)\rightarrow \mathcal E(y):\ \ x\in M, y\in\cW^u(x)\}$ satisfying the following conditions:
\begin{enumerate}
\item
$H^u_{x,x}=Id$, $H^u_{y,z}\circ H^u_{x,y}=H^u_{x,z}$;
\item
$F\circ H^u_{x,y}=H^u_{f(x),f(y)}\circ F$;
\item
$H^u_{x,y}$ is continuous in $x,y$ under the condition $y\in \cW^u_{loc}(x)$;
\end{enumerate}
The invariant stable holonomy is defined in a similar manner.
\end{definition}

One can also consider the projectivized bundle $\mathbb P\mathcal E$ over $M$, with fibers $\mathbb P\mathcal E(x)$ (the projective space of $\mathcal E(x)$), which is also a continuous bundle (with smooth fibers) over $M$. The projectivization of the cocycle $F$, $\mathbb PF$, is a continuous cocycle in $\mathbb P\mathcal E$. If $H$ is an invariant unstable holonomy for the cocycle $F$, then its projectivization $\mathbb PH$ is an invariant unstable holonomy for the cocycle $\mathbb PF$ (see for example \cite{ASV13} for more details on cocycles with holonomy and applications to the study of central Lyapunov exponents).

If $f$ is partially hyperbolic, then the center bundle forms a continuous (in fact H\"older if $f$ is $C^{1+\theta}$) vector bundle $\mathcal E^c(f)$ over $M$ and $Df|_{E^c(f)}$ is a continuous (H\"older) linear cocycle over $f$. A by-product of the proof of Theorem \ref{th:t1} is the following result.

\begin{theorem}\label{th:t2}
Suppose that $f$ is a $C^{1+\theta}$ partially hyperbolic diffeomorphism which is $\theta$-unstable bunched, $\theta\in(0,1]$. Then $Df|_{E^c}$ and $\mathbb PDf|_{\mathbb PE^c}$ have invariant unstable holonomies. The holonomies are also continuous with respect to the map in the $C^{1}$ topology restricted to a $C^{1+\theta}$ neighborhood of $f$. A similar statement holds for the stable holonomy under the $\theta$-stable bunching condition. If $f$ is dynamically coherent then the invariant holonomy coincides with the derivative of the holonomy between center leaves.
\end{theorem}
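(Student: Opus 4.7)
The plan is to construct the invariant unstable holonomy $H^u$ for the cocycle $Df|_{E^c}$ as the uniform limit of an explicit telescoping product built from $Df$ and the Hölder structure of $E^c$, then verify the three axioms directly, descend to the projectivization, and finally identify $H^u$ with $Dh^u$ in the dynamically coherent case. The whole argument rests on the bunching hypothesis: $\lambda_u^\theta>\lambda_c^+/\lambda_c^-$ is precisely the rate condition that makes the telescoping series summable.

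\emph{Construction.} For $y\in\cW^u_{loc}(x)$, let $x_n=f^{-n}(x)$, $y_n=f^{-n}(y)$, so $d(x_n,y_n)\le C\lambda_u^{-n}$. A finite atlas of smooth trivializations of the Hölder bundle $E^c$ supplies, for $n$ large enough, a canonical linear identification $I_n\colon E^c(x_n)\to E^c(y_n)$, and I would set
$$H^u_{x,y,n}\;:=\;Df^n|_{E^c(y_n)}\circ I_n\circ Df^{-n}|_{E^c(x)}\colon E^c(x)\to E^c(y).$$
A direct algebraic expansion gives $H^u_{x,y,n+1}-H^u_{x,y,n}=Df^n|_{E^c(y_n)}\circ\Delta_n\circ Df^{-n}|_{E^c(x)}$, with $\Delta_n=Df|_{E^c(y_{n+1})}\circ I_{n+1}\circ(Df|_{E^c(x_{n+1})})^{-1}-I_n$. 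The $C^{1+\theta}$ hypothesis yields $\theta$-Hölder regularity both of the bundle $E^c$ and of the section $x\mapsto Df(x)|_{E^c(x)}$, and combined with $d(x_{n+1},y_{n+1})\le C\lambda_u^{-n-1}$ this produces $\|\Delta_n\|\le C\lambda_u^{-n\theta}$. Since $\|Df^n|_{E^c(y_n)}\|\cdot\|Df^{-n}|_{E^c(x)}\|\le(\lambda_c^+/\lambda_c^-)^n$, the unstable bunching forces $\|H^u_{x,y,n+1}-H^u_{x,y,n}\|\le C\eta^n$ for some $\eta\in(0,1)$, and so $H^u_{x,y}:=\lim_n H^u_{x,y,n}$ exists uniformly in $(x,y)$.

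\emph{Axioms, continuity, projectivization.} Property (1) is immediate from the choice of $I_n$. The invariance (2) reduces to the reindexing identity $Df|_{E^c(y)}\circ H^u_{x,y,n}=H^u_{f(x),f(y),n+1}\circ Df|_{E^c(x)}$, which holds by construction, and passing to the limit produces the cocycle equation. Continuity in $(x,y)$ is inherited from the uniform convergence. For continuity in $f$ I would use that, on a $C^{1+\theta}$ neighborhood $\mathcal U$ of $f$, the bunching data and the Hölder moduli of $E^c(g)$ and of $Dg|_{E^c(g)}$ are uniformly controlled; this makes the Cauchy estimate uniform on $\mathcal U$ while each finite approximation $H^u_{x,y,n}(g)$ is plainly continuous in $g$ in the $C^1$ topology. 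A symmetric construction of $H^u_{y,x}$ giving $H^u_{y,x}\circ H^u_{x,y}=\mathrm{Id}$ ensures $H^u_{x,y}\in\mathrm{GL}(E^c(x),E^c(y))$, and $\mathbb PH^u$ is then well defined and inherits every property.

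\emph{Coherent case; main obstacle.} When $f$ is dynamically coherent, Theorem \ref{th:t1} provides $h^u_{x,y,f}\in C^1$ with derivative continuous in the data. Differentiating the equivariance $f\circ h^u_{x,y,f}=h^u_{f(x),f(y),f}\circ f$ at $x$ shows $\widetilde H^u_{x,y}:=Dh^u_{x,y,f}(x)$ satisfies (1)--(3); iterating (2) yields $\widetilde H^u_{x,y}=Df^n|_{E^c(y_n)}\circ\widetilde H^u_{x_n,y_n}\circ Df^{-n}|_{E^c(x)}$, and the quantitative Hölder-type control of $\widetilde H^u$ on the diagonal built into the proof of Theorem \ref{th:t1} forces $\widetilde H^u_{x_n,y_n}-I_n$ to decay fast enough to conclude $\widetilde H^u=H^u$. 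The main technical obstacle of the whole argument is the bound $\|\Delta_n\|\le C\lambda_u^{-n\theta}$, which has to exploit simultaneously $\theta$-Hölder regularity of $E^c$ and of $Df|_{E^c}$ and be uniform over a $C^{1+\theta}$ neighborhood of $f$ in order to produce continuity of $H^u$ in the map.
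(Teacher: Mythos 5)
Your telescoping product $H^u_{x,y,n}=Df^n\circ I_n\circ Df^{-n}$ is the standard route to holonomies of fiber-bunched cocycles, and the paper's invariant-section argument is essentially the same computation repackaged as a Banach fixed point, so the overall shape is right. The gap is in the decisive estimate. You assert that the $C^{1+\theta}$ hypothesis yields $\theta$-H\"older regularity of the bundle $E^c$; this is false in general and is precisely the obstruction the paper is written to overcome. The H\"older exponent $\alpha$ of $E^c$ is governed by the contraction/expansion ratios (roughly $\lambda_s/\lambda_c^-$ and $\lambda_c^+/\lambda_u$) and is typically strictly less than $\theta$. With $I_n$ taken from a generic atlas of trivializations, the one-step error $\Delta_n$ is controlled only by $d(E^c(x_{n+1}),E^c(y_{n+1}))\lesssim\lambda_u^{-n\alpha}$, so your series sums only if $\lambda_u^{\alpha}>\lambda_c^+/\lambda_c^-$, i.e.\ under $\alpha$-bunching. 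Since $\alpha<\theta$, this is a strictly stronger condition than the hypothesized $\theta$-bunching; demanding $\alpha\geq\theta$ is exactly the pinching assumption made in the earlier works and removed by this theorem.

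What makes the argument close under $\theta$-bunching alone is a specific, non-arbitrary choice of $I_n$: the projection $p^{E^{su}_p}_{E^c_p,E^c_q}$ parallel to $E^{su}=E^s\oplus E^u$. Because $Dg$ preserves the splitting $E^c\oplus E^{su}$, it intertwines these projections, yielding the algebraic identity
\begin{equation*}
\mathrm{Id}=p^{E^{su}_{g(p)}}_{E^c_{g(q)},E^c_{g(p)}}\circ Dg(p)|_{E^c_q}\circ p^{E^{su}_p}_{E^c_p,E^c_q}\circ \bigl(Dg(p)|_{E^c_p}\bigr)^{-1},
\end{equation*}
and comparing with the bundle map built from $Dg(q)$ instead of $Dg(p)$ reduces the one-step error to a term involving $Dg(q)-Dg(p)$ alone, which is $\theta$-H\"older in $d(p,q)$ regardless of the regularity of $E^c$. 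With this choice your bound $\|\Delta_n\|\le C\lambda_u^{-n\theta}$, and the uniform-over-$\mathcal U(f)$ version needed for continuity in the map, become correct and match the paper's contraction estimate in Section \ref{ss:final}. Your coherent-case identification of $H^u$ with $Dh^u$ also relies on the same unjustified H\"older decay of $E^c$; in the paper this identification is obtained for free from uniqueness of the invariant section once one checks that $p^{E^{su}_p}_{E^c_q,E^c_p}\circ Dh^u_{p,q}-\mathrm{Id}$ lies in the contracting sub-bundle, with no further quantitative input.
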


\begin{remark}
Theorem \ref{th:t1} and Theorem \ref{th:t2} work in particular for $C^2$ maps and the regular (1-) bunching condition.
\end{remark}

Let us make some historical remarks about these results. The differentiability of the holonomies along center leaves was established in \cite{PSW04} for $C^2$ partially hyperbolic diffeomorphisms which are 1-bunched, however the continuity of the derivative with respect to the points or the maps was not considered. The continuity of the derivative with respect to the points was proven in \cite{Ob18} under the additional assumptions of  $\alpha$-bunching and $\alpha$-pinching for some $\alpha>0$. The case of $C^{1+\theta}$ partially hyperbolic diffeomorphisms was addressed in several papers like \cite{BW05}, \cite{Br22}. The differentiability of the holonomy and the continuity of the derivative with respect to the point was obtained under the assumption of $\theta$-bunching together with more restrictive assumptions of pinching. The continuity of the derivative of the holonomy with respect to the map was not addressed to our knowledge.

Regarding the invariant holonomies, there are also various works establishing the existence and the continuity with respect to the map (the continuity with respect to the points is included in the definition) under the assumptions of $C^2$  smoothness, $\theta$-bunching and $\theta$-pinching (see for example \cite{ASV13},\cite{Ma16},\cite{LMY18},\cite{LMY19}). It seems to follow from the construction that in the dynamical coherent case the invariant holonomy of the center bundle cocycle coincides with the derivative of the regular holonomy between the centers of the original partially hyperbolic diffeomorphism.

Our contribution is to get rid of the unnecessary and restrictive pinching conditions, and to establish the full continuity (including with respect to the map) of the derivative of the holonomy and of the invariant holonomy, assuming only $\theta$-bunching and $C^{1+\theta}$ regularity of the map. We also give a unifyed presentation of both the differentiability of the holonomy between centers and the existence of invariant holonomies for the center derivative cocycle.

\subsection{Ideas of the proofs}

The main difficulty in the proof is the lack of sufficient regularity of the invariant bundles. The center bundle is H\"older continuous, but the H\"older exponent is smaller than $\theta$ in general, and this makes difficult to use the control which comes from the $\theta$-bunching and the $C^{\theta}$ regularity of the derivative. A first idea which we use is to consider the invariant holonomy together with a correction of the potential error coming from the variation of the center bundle with respect to the points (the projection from one bundle to the other, roughly along the unstable leaf is good enough). We can expect that the difference has better regularity along the unstable leaves. This observation together with a (more or less) standard application of the invariant section theorem \cite{HPS77} gives us the existence and continuity of the invariant holonomies (Theorem \ref{th:t2} without the identification with the derivative of the regular holonomy in the dynamically coherent case).

The differentiability of the regular holonomy requires more work. Previous works usually start with a good approximations of $\cW^u$ inside $\cW^{cu}$-leaves, and iterate it forward. Unfortunately again the leaves of $\cW^{cu}$ and $\cW^c$ are only $C^{1+\alpha}$ for some $\alpha<\theta$, and this fact limits the regularity of the approximation to $C^{1+\alpha}$, and consequently we loose the control when we iterate forward. The second idea of this paper is to start with a smooth approximation of both $\cW^u$ and $\cW^{cu}$-leaves and iterate it forward. It is important that these approximations are uniformly smooth, which makes the construction a bit more technical. When we iterate forward the approximation of $\cW^{cu}$-leaves and its subfoliation, the bunching condition helps us keep uniform $C^{1+\theta}$ control of the holonomy along the subfoliation. This argument will give us that the holonomy is Lipschitz, with uniform bounds on the Lipschitz constants.

In order to upgrade to differentiability we use the ideas from \cite{HPS77} on Lipschitz jets. The continuity of the derivative and the identification with the invariant holonomy is obtained again using the invariant section theorem.

\subsection{Several applications}

We list a couple of applications of the above results.

\begin{enumerate}
\item
The ergodicity of $C^{1+\theta}$ accessible $\theta$-center bunched partially hyperbolic diffeomorphisms can be obtained under weaker assumptions, without the condition that the invariant bundles are $C^{\theta}$ (\cite{BW10},\cite{RRU08}).
\item
The existence of invariant holonomies for the derivative cocycle on the center bundle for partially hyperbolic diffeomorphisms can be also obtained with weaker assumptions, without the $\theta$-pinching condition (and in $C^{1+\theta}$ regularity). This applies for example to various results concerning the continuity and the non-vanishing of central exponents of partially hyperbolic diffeomorphisms with two dimensional center (\cite{ASV13}, \cite{Ma16}, \cite{LMY18}, \cite{LMY19}, \cite{KS13})
\item
We establish the continuity of the derivative of the holonomies with respect to the points and the map, under more general conditions. This is a useful tool which can be applied in order to obtain  perturbation results related to the uniqueness of u-Gibbs or MMEs for some classes of partially hyperbolic diffeomorphisms (for example along the lines of \cite{Ob21}, \cite{COP22}, \cite{LSYY}) or related to the accessibility of partially hyperbolic diffeomorphisms (\cite{LP21}).
\end{enumerate}

\subsection{Organization of the paper}

In Section 2 we present some tools which we will use in the proof. In particular we discuss the regularity of the holonomy along a subfoliation of a submanifold, and how to approximate immersed submanifolds with smooth ones. In Section 3 we present the proofs.

\section{Preparations}

\subsection{Regularity of holonomy along a subfoliation: some general comments.}
We will start with a discussion about the regularity of the (derivative of) holonomy along a subfoliation of a submanifold in $\mathbb R^d$.

Assume that we have a $C^1$ embedded submanifold $\cW$ inside $\mathbb R^d$. Assume that $\mathcal F$ is a $C^1$ subfoliation of $\cW$. Given two points $x,y$ on the same leaf of $\mathcal F$, and two transversals $T_x$, $T_y$ to $\cF$ inside $\cW$ passing through $x$ and $y$, let $h^{\mathcal F}_{T_x,T_y}:T_x\rightarrow T_y$ be the holonomy given by $\mathcal F$.

Let $D_x=T_xT_x$ and $D_y=T_yT_y$ the tangent planes to $T_x$, $T_y$ in $x$ and $y$. Let $Dh^{\cF}_{T_x,T_y}:D_x\rightarrow D_y$ be the derivative of the holonomy $h^{\mathcal F}_{T_x,T_y}$. Clearly it depends only on $D_x$ and $D_y$ and not on the transversals $T_x$ and $T_y$, this is why we will also use the notation $Dh^{\cF}_{D_x,D_y}$. Given a decomposition $A\oplus B=\mathbb R^d$, we denote by $p_A^B:\mathbb R^d\rightarrow A$ the projection to $A$ parallel to $B$. If we want to specify that we consider the restriction of $p_A^B$ to a subspace $A'$ we will denote it $p_{A',A}^B$.

Let $d_{\mathcal F}$ be the distance induced on the leaves of $\mathcal F$.
\begin{definition}
Let $x\in\cW$, $\Delta$ be a continuous cone field inside $T\cW$ uniformly transverse to $\cF$, $E_x$ transverse to $\Delta_x$ and $\delta>0$. We say that {\it $Dh^{\mathcal F}$ is $(C_{\mathcal F},\theta)$-H\"older along $\mathcal F$ at $x$ with respect to $\Delta,E_x$ and at scale $\delta$} if
\begin{equation}\label{eq:reg}
\left\|Dh^{\mathcal F}_{D_x,D_y}(x)-p_{D_x,D_y}^{E_x}\right\|\leq C_{\mathcal F}d_{\mathcal F}(x,y)^{\theta},\ \ \forall y\in\mathcal F_{\delta}(x),\ \forall D_x\in\Delta_x,D_y\in\Delta_y.
\end{equation}

If instead of $\mathbb R^d$ we are in a smooth Riemannian manifold, the definition is similar, with the requirement that the condition \ref{eq:reg} holds in an exponential chart at $x$ of size $\delta$.
\end{definition}

Let us remark that given a $C^2$ submanifold $\cW$ with a $C^2$ subfoliation $\cF$, the continuous cone field $\Delta$, and a subspace $E_x$ containing $T_x\cF(x)$, there exist $C_{\cF},\delta>0$ such that $Dh^{\mathcal F}$ is $(C_{\mathcal F},\theta)$-H\"older along $\mathcal F$ at $x$ with respect to $\Delta,E_x$ and at scale $\delta$ (we can actually take $\theta=1$). The following lemma explains this fact in more detail.

We need a bound on the transversality between $E_x$ and $\Delta$ at the scale $\delta$:
$$
t(E_x,\Delta,\delta)= \sup \left\{\frac 1{\sin(\angle(E_x,D_y))}:\ y\in\cW_\delta(x), D_y\in\Delta_y\right\}.
$$
In particular we have
$$
\|p_{D_y}^{E_x}\|\leq t(E_x,\Delta,\delta) \hbox{ for all $y$ such that } d(x,y)<\delta.
$$
We aslo consider a bound on the transversality between $\Delta$ and $\cF$:
$$
t(\cF,\Delta)= \sup \left\{\frac 1{\sin(\angle(T_y\cF(y),D_y))}:\  D_y\in\Delta_y\right\}.
$$

We say that $\phi:\mathbb R^d\rightarrow\mathbb R^d$ is a {\it linear parametrization of ($\cW,\cF$)} if $\phi(\mathbb R^{\dim\cW}\times\{0\}^{d-\dim\cW})=\cW$ and $\phi(\mathbb R^{\dim\cF}\times\{b\}\times\{0\}^{d-\dim\cW})=\cF(\phi(0,b,0)), \forall b\in\mathbb R^{\dim\cW-\dim\cF}$ ($\phi$ basically straightens both $\cW$ and $\cF$). If $\phi$ is defined only between balls of radius $\delta$ at the origin and $x$ we say that it is a {\it $\delta$-linear parametrization of ($\cW,\cF$) at $x$}.

\begin{lemma}\label{le:cf}
Let $\cW$ be a $C^2$ submanifold in $\mathbb R^d$ and $\cF$ a $C^2$ subfoliation of $\cW$. Let $\Delta$ be a continuous cone field in $T\cW$ transferse to $\cF$, $x\in\cW$ and $E_x$ a subspace containing $T_x\cF(x)$ and transverse to $\Delta_y$ for all $y\in\cF_{\delta}(x)$ for some $\delta>0$.  Let $\phi$ be a $C^2$ $\delta$-linear parametrization of ($\cW,\cF$) at $x$. Then $Dh^{\mathcal F}$ is $(C_{\mathcal F},\theta)$-H\"older along $\mathcal F$ at $x$ with respect to $\Delta,E_x$ and at scale $\delta$ for $C_{\cF}=\|\phi\|_{C^{1+\theta}}^2\cdot\|\phi^{-1}\|_{C^1}^{2+\theta}\cdot t(E_x,\Delta,\delta)\cdot t(\cF,\Delta)\cdot\delta^{\theta}$.
\end{lemma}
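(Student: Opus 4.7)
The plan is to straighten both $\cW$ and $\cF$ via $F := \phi^{-1}$, compute the holonomy derivative in the chart where it becomes a single linear projection, transport it back via $D\phi$, and compare with $p^{E_x}_{D_x,D_y}$. In the chart, $\cW$ becomes $V := \mathbb{R}^{\dim\cW}\times\{0\}^{d-\dim\cW}$ and the leaves of $\cF$ become affine translates of $W := \mathbb{R}^{\dim\cF}\times\{0\}^{d-\dim\cF}$. Setting $A := DF(x)$, $B := DF(y)$, $\bar x := F(x)$, $\bar y := F(y)$, the chart-holonomy between the transversals with tangents $A(D_x)$ and $B(D_y)$ is translation by a $W$-vector, so its derivative at $\bar x$ is $p^{W}_{A(D_x),B(D_y)}$. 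The chain rule then gives
$$Dh^{\cF}_{D_x,D_y}(x) \;=\; D\phi(\bar y)\circ p^{W}_{A(D_x),B(D_y)}\circ A.$$

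The key step is a telescoping that isolates the Hölder-small part. For $v\in D_x$, define $w\in W$ by $p^{W}_{A(D_x),B(D_y)}(Av)=Av+w$. Using $D\phi(\bar x)\circ A = \mathrm{Id}$, I would write
$$Dh^{\cF}(x)v - v \;=\; \bigl(D\phi(\bar y)-D\phi(\bar x)\bigr)(Av+w) \;+\; D\phi(\bar x)w,$$
and observe that the last summand lies in $D\phi(\bar x)W = T_x\cF(x)\subset E_x$. Since $Dh^{\cF}(x)v$ and $p^{E_x}_{D_x,D_y}(v)$ both lie in $D_y$, while $p^{E_x}_{D_y}$ acts as the identity on $D_y$ and annihilates $E_x$, applying $p^{E_x}_{D_y}$ to the previous identity yields
$$Dh^{\cF}(x)v - p^{E_x}_{D_x,D_y}(v) \;=\; p^{E_x}_{D_y}\Bigl(\bigl(D\phi(\bar y)-D\phi(\bar x)\bigr)(Av+w)\Bigr).$$

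What remains is factor-by-factor operator-norm bookkeeping. The norm $\|p^{E_x}_{D_y}\|$ is at most $t(E_x,\Delta,\delta)$ by definition; the Hölder regularity of $D\phi$ together with $\|\bar y-\bar x\|\le\|\phi^{-1}\|_{C^1}\,d_\cF(x,y)$ gives $\|D\phi(\bar y)-D\phi(\bar x)\|\le\|\phi\|_{C^{1+\theta}}\|\phi^{-1}\|_{C^1}^{\theta}\,d_{\cF}(x,y)^{\theta}$; and $\|Av+w\|=\|p^{W}_{V,B(D_y)}(Av)\|$ is bounded by conjugating the chart-projection through $D\phi(\bar y)$ to the original projection $p^{T_y\cF(y)}_{T_y\cW,D_y}$, of norm at most $t(\cF,\Delta)$, producing a factor $\|\phi\|_{C^1}\|\phi^{-1}\|_{C^1}^{2}\,t(\cF,\Delta)\|v\|$. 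Combining these estimates, using $\|\phi\|_{C^1}\le\|\phi\|_{C^{1+\theta}}$ and $d_\cF(x,y)\le\delta$ to absorb the residual $\delta^{\theta}$ factor, delivers the desired Hölder bound with the claimed $C_{\cF}$.

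The main subtlety is the algebraic identity in the middle paragraph: comparing $Dh^\cF(x)v$ with $v$ (rather than with $Av$) and then projecting via $p^{E_x}_{D_y}$ uses the hypothesis $T_x\cF(x)\subset E_x$ in an essential way to annihilate the term $D\phi(\bar x)w$, which does not itself vanish as $y\to x$ and would otherwise obstruct any Hölder-small bound. Everything after that is standard operator-norm bookkeeping and the conjugation of a projection by $D\phi$.
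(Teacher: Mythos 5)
Your proof is correct and takes essentially the same route as the paper's: straighten by $\phi^{-1}$, write $Dh^{\cF}_{D_x,D_y}$ as the $D\phi$-conjugate of the linear chart projection, isolate the $\theta$-H\"older-small term $D\phi(\bar y)-D\phi(\bar x)$, and estimate each remaining factor by $\|\phi\|_{C^1}$, $\|\phi^{-1}\|_{C^1}$, and the two transversality bounds. The algebraic step differs cosmetically — the paper rewrites $p^{E_x}_{D_x,D_y}$ through the chart projection $p^{\tilde E_x}_{\tilde D_x,\tilde D_y}$ and factors the difference, whereas you decompose $p^W_{A(D_x),B(D_y)}(Av)=Av+w$ and apply $p^{E_x}_{D_y}$ to annihilate $D\phi(\bar x)w\in E_x$ — but these are equivalent manipulations, both hinging on $T_x\cF(x)\subset E_x$ in the same essential way, and you arrive at the same factor-by-factor estimate (indeed your bookkeeping cleanly retains the $d_{\cF}(x,y)^\theta$ decay that the printed chain of inequalities in the paper sloppily replaces with $\delta^\theta$).
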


\begin{proof}
Denote $\tilde*$ the pushed forward under $\phi^{-1}$ of the objects $*$. Observe that
$$Dh^{\tilde\cF}_{\tilde D_x,\tilde D_y}=p^{\tilde E_x}_{\tilde D_x,\tilde D_y}$$
because $\tilde E_x$ contains the plane parallel to the linear foliation $\tilde\cF$. Denote $D'=D\phi(\tilde x)\tilde D_y$ and $\tilde D'=D\phi^{-1}(x)D_y$. Since $D\phi\tilde E_x=E_x$ we have
\begin{eqnarray*}
p^{E_x}_{D_x,D_y}&=&D\phi(\tilde x)|_{\tilde D'}\circ p^{\tilde E_x}_{\tilde D_x,\tilde D'}\circ D\phi^{-1}(x)|_{D_x}\\
&=&D\phi(\tilde x)|_{\tilde D'}\circ p^{\tilde E_x}_{\tilde D_y,\tilde D'}\circ p^{\tilde E_x}_{\tilde D_x,\tilde D_y}\circ D\phi^{-1}(x)|_{D_x}\\
&=&p^{E_x}_{D',D_y}\circ D\phi(\tilde x)|_{\tilde D_y}\circ p^{\tilde E_x}_{\tilde D_x,\tilde D_y}\circ D\phi^{-1}(x)|_{D_x}.
\end{eqnarray*}
Then
\begin{eqnarray*}
\|Dh^{\cF}_{D_x,D_y}-p^{E_x}_{D_x,D_y}\|&=&\|D\phi(\tilde y)|_{\tilde D_y}\circ Dh^{\tilde\cF}_{\tilde D_x,\tilde D_y}\circ D\phi^{-1}(x)|_{D_x}-p^{E_x}_{D_x,D_y}\|\\
&=&\|\left(D\phi(\tilde y)|_{\tilde D_y}-p^{E_x}_{D',D_y}\circ D\phi(\tilde x)|_{\tilde D_y}\right)\circ p^{\tilde E_x}_{\tilde D_x,\tilde D_y}\circ D\phi^{-1}(x)|_{D_x}\|\\
&\leq&\|p^{E_x}_{D_y}\|\cdot\|D\phi(\tilde y)-D\phi(\tilde x)\|\cdot\|p^{\tilde E_x}_{\tilde D_y}\|\cdot\|D\phi^{-1}(x)\|\\
&\leq&\frac{t(E_x,\Delta,\delta)\cdot\|\phi\|_{C^{1+\theta}}\cdot\|\phi^{-1}\|_{C^1}^{1+\theta}\cdot  \delta^{\theta}}{\sin(\angle(\tilde D_y,T\tilde F))}\\
&\leq&\|\phi\|_{C^{1+\theta}}^2\cdot\|\phi^{-1}\|_{C^1}^{2+\theta}\cdot t(E_x,\Delta,\delta)\cdot t(\cF,\Delta)\cdot\delta^{\theta}.
\end{eqnarray*}
We used the fact that 
$$
\sin(\angle(D_y,T_yF))\leq \sin(\angle(\tilde D_y,T\tilde F))\|D\phi\|\cdot\|D\phi^{-1}\|.
$$
\end{proof}

We want to study the behavior of the regularity of foliations under the push-forward of a diffeomorphism. Assume that $\cW$ is contained in the open set $U$ and $f:U\rightarrow F(U)$ is a $C^{1+\theta}$ diffeomorphism. We will use the following notations for the bounds of $Df$ along $\Delta$ and $T\mathcal F$:
\begin{eqnarray*}
\lambda_{\Delta}^+(f,x,\delta)&:=&\sup_{d(x,y)<\delta}\|Df(y)|_{\Delta_y}\|;\\
\lambda_{\Delta}^-(f,x,\delta)&:=&\left(\sup_{d(x,y)<\delta}\|(Df(y)|_{\Delta_y})^{-1}\|\right)^{-1};\\
\lambda_{\mathcal F}(f,x,\delta)&:=&\left(\sup_{d(x,y)<\delta}\|(Df(y)|_{T_y\mathcal F(x)})^{-1}\|\right)^{-1};\\
\end{eqnarray*}

The following lemma is one of the main tools behind our proof. It keeps track on how the constant $C_{\cF}$ changes under iterations.

\begin{lemma}\label{le:main}
Let $\mathcal F$ be a foliation as above such that $Dh^{\mathcal F}$ is $(C_{\mathcal F},\theta)$-H\"older along $\mathcal F$ at $x\in\mathbb R^n$ with respect to $\Delta,E_x$ and at scale $\delta$. Let $f:U\rightarrow f(U)\subset\mathbb R^n$ be a $C^{1+\theta}$ diffeomorphism. Then for any $\Delta'\subset f_*\Delta$ and $\delta'<\lambda_{\cF}(f,x,\delta)\delta$, $Dh^{f_*\mathcal F}$ is $(C_{f_*\mathcal F},\theta)$-H\"older along $f_*\mathcal F$ at $f(x)$ with respect to $\Delta',f_*E_x$ and at scale $\delta'$, where
\begin{equation}
C_{f_*\mathcal F}=\frac{\lambda_\Delta^+(f,x,\delta)C_{\mathcal F}+t(E_x,\Delta,\delta)t(f_*E_x,\Delta',\delta')\|Df\|_{C^{\theta}}}{\lambda_\Delta^-(f,x,\delta)\lambda_{\mathcal F}(f,x,\delta)^{\theta}},
\end{equation}
\end{lemma}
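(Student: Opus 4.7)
The plan is to use the naturality of holonomies under $f$ and then split the resulting error into two pieces: one coming from the original error on $\cF$, the other from the nonlinearity of $f$. Write $x'=f(x)$, $y'=f(y)$, and for $D_{x'}\in\Delta'_{x'}$, $D_{y'}\in\Delta'_{y'}$ set $D_x=(Df(x))^{-1}D_{x'}$ and $D_y=(Df(y))^{-1}D_{y'}$. The scale condition $\delta'<\lambda_\cF(f,x,\delta)\delta$ ensures that $y'\in f_*\cF_{\delta'}(x')$ implies $y\in\cF_\delta(x)$, so the standing hypothesis on $Dh^\cF$ at scale $\delta$ applies; the inclusion $\Delta'\subset f_*\Delta$ guarantees $D_x\in\Delta_x$ and $D_y\in\Delta_y$. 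Naturality of the holonomy then yields
\begin{equation*}
Dh^{f_*\cF}_{D_{x'},D_{y'}}=Df(y)|_{D_y}\circ Dh^\cF_{D_x,D_y}\circ\bigl(Df(x)|_{D_x}\bigr)^{-1}.
\end{equation*}

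Next I would add and subtract the conjugate of the linear projection, so that $Dh^{f_*\cF}_{D_{x'},D_{y'}}-p^{f_*E_x}_{D_{x'},D_{y'}}=A+B$ with
\begin{equation*}
A=Df(y)|_{D_y}\circ\bigl(Dh^\cF_{D_x,D_y}-p^{E_x}_{D_x,D_y}\bigr)\circ\bigl(Df(x)|_{D_x}\bigr)^{-1},
\end{equation*}
\begin{equation*}
B=Df(y)|_{D_y}\circ p^{E_x}_{D_x,D_y}\circ\bigl(Df(x)|_{D_x}\bigr)^{-1}-p^{f_*E_x}_{D_{x'},D_{y'}}.
\end{equation*}
The term $A$ is the conjugate of the original error. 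Combining $\|Df|_{D_y}\|\leq\lambda_\Delta^+(f,x,\delta)$, $\|(Df|_{D_x})^{-1}\|\leq 1/\lambda_\Delta^-(f,x,\delta)$, the hypothesis $\|Dh^\cF_{D_x,D_y}-p^{E_x}_{D_x,D_y}\|\leq C_\cF d_\cF(x,y)^\theta$, and the leafwise expansion bound $d_\cF(x,y)\leq d_{f_*\cF}(x',y')/\lambda_\cF(f,x,\delta)$, one reads off precisely the $\lambda_\Delta^+C_\cF$ summand in the numerator.

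The substantive estimate is on $B$, and this is where I expect the main obstacle. Given $v\in D_{x'}$, set $u=(Df(x)|_{D_x})^{-1}v\in D_x$ and $w=p^{E_x}_{D_x,D_y}(u)=u+e\in D_y$ with $e\in E_x$; a direct manipulation gives
\begin{equation*}
v-Df(y)(w)=(Df(x)-Df(y))(w)-Df(x)(e).
\end{equation*}
The key observation is $-Df(x)(e)\in Df(x)E_x=f_*E_x$, so projecting parallel to $f_*E_x$ onto $D_{y'}$ kills the second term and leaves $B(v)=p^{f_*E_x}_{D_{y'}}\bigl((Df(y)-Df(x))(w)\bigr)$. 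Bounding the projection by $t(f_*E_x,\Delta',\delta')$, the Hölder difference by $\|Df\|_{C^\theta}d(x,y)^\theta$, and $\|w\|\leq t(E_x,\Delta,\delta)\|u\|\leq t(E_x,\Delta,\delta)\|v\|/\lambda_\Delta^-(f,x,\delta)$, together with $d(x,y)\leq d_\cF(x,y)\leq d_{f_*\cF}(x',y')/\lambda_\cF(f,x,\delta)$, yields exactly the second summand. Adding the bounds on $\|A\|$ and $\|B\|$ produces the claimed $C_{f_*\cF}$. The only genuinely delicate step is spotting the cancellation $-Df(x)(e)\in f_*E_x$ that cleanly separates the nonlinearity of $f$ from the projection geometry; the rest is organized bookkeeping of the transversality and expansion constants.
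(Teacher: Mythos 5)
Your proposal is correct and follows essentially the same route as the paper: both decompose the error as the conjugate of the original Hölder defect plus a "linearization" term, and both obtain the second summand from the cancellation that rewrites the linearization term as a projection parallel to $f_*E_x$ applied to $(Df(y)-Df(x))$. The paper encodes this cancellation in the operator identity $p_{D'_x,D'_y}^{E'_x}\circ Df(x)|_{D_x}=p_{\tilde D,D'_y}^{E'_x}\circ Df(x)|_{D_y}\circ p_{D_x,D_y}^{E_x}$, whereas you verify it pointwise on a vector $v$ by isolating $-Df(x)(e)\in f_*E_x$; the underlying computation and resulting constants are identical.
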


\begin{proof} Denote $E'_x=Df(x)E_x, D'_x=Df(x)D_x, D'_y=Df(y)D_y, \tilde D=Df(x)D_y$.

Since $Df(x)$ takes the decomposition $E_x\oplus D_x$ to $E'_x\oplus D'_x$, we have that
$$
p_{D'_x,D'_y}^{E'_x}\circ Df(x)|_{D_x}=p_{\tilde D,D'_y}^{E'_x}\circ Df(x)|_{D_y}\circ p_{D_x,D_y}^{E_x}.
$$

We also have
$$
d_{f_*\mathcal F}(f(x),f(y))\geq\lambda_{\mathcal F}(f)d(x,y).
$$
For simplicity we will use the notation $\lambda_{\Delta}^{\pm}, \lambda_{\cF}$. We have
\begin{align*}
\left\|Dh^{f_*\mathcal F}_{D'_x,D'_y}-p_{D'_x,D'_y}^{E'_x}\right\|=&\left\|Df(y)|_{D_y}\circ Dh^{\mathcal F}_{D_x,D_y}(x)\circ \left(Df(x)|_{D_x}\right)^{-1}-p_{D'_x,D'_y}^{E'_x}\right\|\\
\leq&\left\|Df(y)|_{D_y}\circ\left(Dh^{\mathcal F}_{D_x,D_y}(x)-p_{D_x,D_y}^{E_x}\right)\circ \left(Df(x)|_{D_x}\right)^{-1}\right\|+\\
&+\left\|Df(y)|_{D_y}\circ p_{D_x,D_y}^{E_x}\circ \left(Df(x)|_{D_x}\right)^{-1}-p_{D'_x,D'_y}^{E'_x}\right\|\\
\leq&\left\|\left(Df(y)|_{D_y}\circ p_{D_x,D_y}^{E_x}-p_{D'_x,D'_y}^{E'_x}\circ Df(x)|_{D_x}\right)\left(Df(x)|_{D_x}\right)^{-1}\right\|+\\
&+\frac{\lambda_\Delta^+}{\lambda_\Delta^-}C_{\mathcal F}d_{\mathcal F}(x,y)^{\theta}\\
\leq&\frac{\lambda_\Delta^+}{\lambda_\Delta^-}C_{\mathcal F}d_{\mathcal F}(x,y)^{\theta}+\frac{1}{\lambda_\Delta^-}\left\|Df(y)|_{D_y}\circ p_{D_x,D_y}^{E_x}-p_{\tilde D,D'_y}^{E'_x}\circ Df(x)|_{D_y}\circ p_{D_x,D_y}^{E_x}\right\|\\
\leq&\frac{\lambda_\Delta^+}{\lambda_\Delta^-}C_{\mathcal F}d_{\mathcal F}(x,y)^{\theta}+\frac{1}{\lambda_\Delta^-}\|p_{D'_y}^{E'_x}\|\cdot \|Df(y)|_{D_y}-Df(x)|_{D_y}\| \cdot\|p_{D_x,D_y}^{E_x}\|\\
\leq&\left(\frac{\lambda_\Delta^+C_{\mathcal F}}{\lambda_\Delta^-\lambda_{\mathcal F}^{\theta}}+\frac{t(E_x,\Delta,\delta)t(f_*E_x,\Delta',\delta')\|Df\|_{C^{\theta}}}{\lambda_\Delta^-\lambda_{\mathcal F}^{\theta}}\right)d_{f_*\mathcal F}(f(x),f(y))^{\theta}.
\end{align*}
\end{proof}

\subsection{Smooth approximations of invariant submanifolds.}\label{ss:app}

The center-unstable leaves $\cW^{cu}$ of the partially hyperbolic diffeomorphism $f$ are subfoliated by the unstable leaves $\cW^u$, but unfortunately they are not smooth enough in order to carry out the ideas from the previous sub-section. This is why we need to construct smooth approximations of the center-unstable leaves, together with a smooth approximation of the unstable subfoliation. We need to approximate pieces of $\cW^{cu}$ which are arbitrarily large in the center direction, while making sure that the $C^2$ bounds of the approximations are uniform. The reader can keep in mind some specific examples where the smooth approximations are more or less straightforward: fake foliations -- $cu$-subspace subfoliated by $u$-subspaces; perturbations of linear maps -- the linear foliations of the original linear map. The case of partially hyperbolic diffeomorphisms which are fibered over hyperbolic homeomorphisms is also easier, because the center leaves are uniform $C^{1+\alpha}$ embeddings of the same compact fiber, and one can use standard smooth approximation. Our construction is a bit more technical because we want to include possible large pieces of the center manifolds with possible complicated topology.

Let us make some preparations.

\begin{definition}
A $C^r$ submanifold $\cW$ has size greater than $\delta$ at $x$ if within the exponential chart at $x$, $W$ contains the graph of a $C^r$ function $g$ from the ball of radius $\delta$ in $T_x\cW$ to the orthogonal complement $T_x\cW^{\perp}$.

If the ball of radius $\delta$ at $x$ in the $C^r$ submanifold $\cW$ can be written, in an exponential chart at $x\in M$, as the graph of a $C^r$ function $g$ from an open subset of $T_x\cW$ to the orthogonal complement $T_x\cW^{\perp}$, then the $(C^r,x,\delta)$ size of $\cW$ is $\|\cW\|_{C^r,x,\delta}=\|g\|_{C^r}$.
\end{definition}

Eventually modifying the Riemannian metric, we can assume that the invariant subspaces are close to orthogonal.
\begin{definition}
The continuous cone field $\Delta^*_{\epsilon}$ over $M$ is defined in the following way: $\Delta^*_{\epsilon}(x)$ contains the subspaces of $T_xM$ which have the same dimension as $E^*(x)$ and are $\epsilon$-close to $E^*(x)$, $*\in\{s,c,u,cs,su,cu\}$.
\end{definition}
For $\epsilon$ small we have that $\Delta^u_{\epsilon}$ and $\Delta^{cu}_{\epsilon}$ are forward invariant while $\Delta^s_{\epsilon}$ and $\Delta^{cs}_{\epsilon}$ are backward invariant.

Fix $\epsilon_0,\delta_0>0$ and a $C^{1+\theta}$ neighborhood $\mathcal U(f)$ of $f$ such that
\begin{itemize}
\item
the $s,sc$-cones of size $2\epsilon_0$ are backwards invariant while the $u,cu$-cones are forward invariant, for all $g\in\mathcal U(f)$;
\item
the cone fields $\Delta^*_{2\epsilon_0}$ are uniformly transverse for all $*\in\{s,c,u,cs,su,cu\}$ at the scale $\delta_0$, meaning that for every $x\in M$, within the exponential chart at $x$, $\Delta^*_{2\epsilon_0}(x)$ and $\Delta^{*'}_{2\epsilon_0}(y)$ are uniformly transverse for $d(x,y)<\delta_0$: $t(\Delta^*_{2\epsilon_0},\Delta^{*'}_{2\epsilon_0},\delta_0)<2$;
\item
The bunching condition holds at the ($2\epsilon_0,\delta_0$)- scale for all $g\in\mathcal U(f)$, meaning that
\begin{equation}\label{eq:bunch}
\frac{\lambda^+_{\Delta^c_{2\epsilon_0}}(g,x,\delta_0)}{\lambda^-_{\Delta^c_{2\epsilon_0}}(g,x,\delta_0)\lambda^-_{\Delta^u_{2\epsilon_0}}(g,x,\delta_0)^{\theta}}<\mu<1,\ \ \forall g\in\mathcal U(f),\forall x\in M.
\end{equation}
\item
The center bundle is uniformly $C^{\alpha}$ and the local center manifolds are uniformly $C^{1+\alpha}$, meaning that there exists $C_{\alpha}>0$ such that for every $g\in\mathcal U(f)$ and every $x,y\in M$, $d(E^c(x,g),E^c(y,g))\leq C_{\alpha}d(x,y)^{\alpha}$ and $\|\cW^c_{4\delta_0}(x,g)\|_{C^{1+\alpha},x,4\delta_0}<C_{\alpha}$.
\end{itemize}

The following lemma is an immediate consequence of the transversality. We say that the submanifold $W$ is tangent to the cone field $\Delta$ if $T_yW\in\Delta(y)$, for all $y\in W$.
\begin{lemma}[Local product structure]\label{le:ps}
There exist $\delta_p>0$ such that for any $0<\delta\leq\delta_p$, any $x,y\in M$ with $d(x,y)<\delta$, any $W_{2\delta}(x)$ $C^1$ manifold of size $2\delta$ at $x$ tangent to $\Delta^*_{\epsilon_0}$, and any $W_{2\delta}(y)$ $C^1$ manifold of size $2\delta$ at $y$ tangent to $\Delta^{*'}_{\epsilon_0}$, where $*$ and $*'$ are complementary combinations of $\{s,c,u\}$, then $W_{2\delta}(x)$ and $W_{2\delta}(y)$ intersect transversally in a unique point.
\end{lemma}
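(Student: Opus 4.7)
The plan is to reduce to a graph-transversality statement in a single exponential chart and then invoke a contraction-mapping argument whose feasibility is guaranteed by the uniform transversality of the cone fields $\Delta^*_{2\epsilon_0}$ at scale $\delta_0$.

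First I would fix $\delta_p\le\delta_0/4$ small enough that, within any exponential chart of radius $4\delta_p$, distances and angles are distorted by an arbitrarily small factor (close to $1$). Working in $\exp_x$ I pull back both $W(x)$ and $W(y)$ to a neighbourhood of the origin in $T_xM$. Since $d(x,y)<\delta\le\delta_p$, the point $\tilde y:=\exp_x^{-1}(y)$ satisfies $|\tilde y|<\delta$, and the pulled-back manifold $\widetilde W(y)$ is a $C^1$ graph of controlled size at $\tilde y$ tangent to a subspace lying in $\Delta^{*'}_{2\epsilon_0}(y)$ (after transporting to $T_xM$ the estimate worsens only by a factor close to $1$).

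Next I would choose an auxiliary splitting $T_xM=A\oplus B$ with $A\in\Delta^{*}_{\epsilon_0}(x)$ and $B\in\Delta^{*'}_{\epsilon_0}(x)$, so that $\dim A+\dim B=\dim M$ because $*$ and $*'$ are complementary combinations of $\{s,c,u\}$. The hypothesis $t(\Delta^*_{2\epsilon_0},\Delta^{*'}_{2\epsilon_0},\delta_0)<2$ implies that every tangent plane of $W(x)$ is uniformly transverse to $B$ and every tangent plane of $\widetilde W(y)$ is uniformly transverse to $A$. Consequently $W(x)$ can be written as the graph of a $C^1$ map $\varphi:U_A\to B$ with $\|D\varphi\|\le\kappa<1$ defined on a ball $U_A\subset A$ of radius at least $2\delta(1-\kappa)$, and $\widetilde W(y)$ as the graph of a $C^1$ map $\psi:U_B\to A$ centered at $\tilde y$ with $\|D\psi\|\le\kappa<1$ and $U_B\subset B$ of comparable radius; here $\kappa$ can be made close to the sine of the maximal opening of the cones, hence arbitrarily small by shrinking $\epsilon_0$ (which was already fixed at the outset).

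Finding the intersection then amounts to solving the fixed-point equation $a=\psi(\varphi(a))$ in $U_A$, where $\psi\circ\varphi$ is a contraction of ratio $\kappa^2$. Because $|\tilde y|<\delta$ and the domains have radius $\ge2\delta(1-\kappa)$, the map sends a suitable ball into itself, so the Banach fixed-point theorem produces a unique solution; this yields the unique intersection point $W(x)\cap W(y)$ inside the prescribed domains. Transversality of the intersection is immediate: at the intersection point $T_zW(x)\in\Delta^*_{\epsilon_0}$ and $T_zW(y)\in\Delta^{*'}_{\epsilon_0}$, which are transverse with angle bounded away from zero by the cone hypothesis. The main technical nuisance will be bookkeeping the small distortions introduced by the exponential chart and by the transport of $\Delta^{*'}_{\epsilon_0}(y)$ to $T_xM$, but these can all be absorbed by choosing $\delta_p$ (and, if necessary, $\epsilon_0$) small enough at the start.
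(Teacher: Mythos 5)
The paper gives no proof of this lemma: it is stated as ``an immediate consequence of the transversality,'' with no argument supplied, so there is nothing in the source to compare against line by line. Your contraction-mapping proof is the standard argument for local product structure, and the structure is sound: pass to an exponential chart at $x$, choose a complementary pair $A\oplus B$ with $A\in\Delta^*_{\epsilon_0}(x)$, $B\in\Delta^{*'}_{\epsilon_0}(x)$, write each manifold as a Lipschitz graph over the factor it is transverse to, and solve $a=\psi(\varphi(a))$ by Banach's fixed point theorem.

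Two small points of bookkeeping are worth tightening if you want a fully rigorous writeup. First, the clause ``hence arbitrarily small by shrinking $\epsilon_0$'' is a red herring and you correctly note $\epsilon_0$ is already fixed; what the argument actually needs is a fixed bound such as $\kappa\le 1/2$, which follows because the paper has taken $\epsilon_0$ small enough that $t(\Delta^*_{2\epsilon_0},\Delta^{*'}_{2\epsilon_0},\delta_0)<2$. With that, the fixed point lies within radius roughly $\delta/(1-\kappa)$ of the origin, which is comfortably inside a domain of radius of order $2\delta$, so the invariant-ball step goes through. Second, writing a connected $C^1$ manifold of size $2\delta$ with tangent planes uniformly transverse to $B$ as a single graph over a ball in $A$ (rather than merely locally a graph) deserves a sentence: the projection to $A$ is an immersion with uniformly bounded inverse derivative, hence injective on the size-$2\delta$ piece once $\delta_p$ is small relative to the injectivity radius. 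Neither of these is a gap, just places where an examiner would want the estimate spelled out; the proof as proposed is correct and supplies exactly what the paper leaves implicit.
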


Now we are ready to construct the smooth approximations..

\subsubsection{Smooth uniform approximation of center manifolds}

The first step is to approximate large pieces of center manifolds with smooth ones, while keeping control on the smoothness of the approximations.

Fix a smooth approximation $\tilde E^{su}$ inside $\Delta^{su}_{\epsilon_0}$. There exists $0<\epsilon_1<\epsilon_0$ such that for every $p\in M$, the family $\{\exp_x(B_{\epsilon_1}\tilde E^{su}(x)):\ x\in \cW^c(p)\}$ subfoliate a tubular neighborhood of $\cW^c(p)$. Let $\tilde h^{su}$ be the holonomy generated by this subfoliation.

\begin{lemma}
For any $\epsilon>0$ small enough, any $p\in M$ and any $R>0$, there exists a smooth approximation of size $\epsilon$ of $\cW^c_R(p)$, meaning the following. There exists a smooth immersed manifold (possible with self-intersections) $\tilde\cW^c_{R,\epsilon}(p)$ tangent to $\Delta^c_{\epsilon}$, together with a local diffeomorphism $\tilde h^{su}_{\epsilon}$ given by the local $\tilde h^{su}$-holonomy.\\
Furthermore the approximations are uniform in the following sense. For every $x\in\tilde\cW^c_{R,\epsilon}(p)$ we have $\|\tilde\cW^c_{R,\epsilon}(p)\|_{C^{1+\alpha},x,\delta_0}\leq\tilde C_{\alpha}$ for some $\tilde C_{\alpha}$ independent of $p,R,\epsilon,f$, and $\|\tilde\cW^c_{R,\epsilon}(p)\|_{C^2,x,\delta_0}\leq\tilde C(\epsilon)$ for some $\tilde C(\epsilon)$ independent of $p,R,f$ (but depends on $\epsilon$).
\end{lemma}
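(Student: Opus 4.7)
The plan is to smooth $\cW^c_R(p)$ locally in exponential charts by convolution and then glue the pieces using a smooth partition of unity, performing the gluing inside the smooth transversal foliation $\{\exp_y(B_{\epsilon_1}\tilde E^{su}(y)):y\in\cW^c_R(p)\}$. Since $\tilde E^{su}$ is $C^\infty$ and uniformly transverse to $\Delta^c_{2\epsilon_0}$, this foliation provides a smooth tubular neighborhood of $\cW^c_R(p)$, and the desired $\tilde\cW^c_{R,\epsilon}(p)$ will appear as a $C^\infty$ section of this tube, $C^1$-close to $\cW^c_R(p)$, with $\tilde h^{su}_\epsilon:\tilde\cW^c_{R,\epsilon}(p)\to\cW^c_R(p)$ being the restriction of the smooth $\tilde h^{su}$-projection.

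Concretely, pick a maximal $\delta_0/4$-separated set $\{x_i\}_{i=1}^N\subset\cW^c_R(p)$; the balls $B(x_i,\delta_0/2)$ cover $\cW^c_R(p)$, and the uniform bound $\|\cW^c_{4\delta_0}(x_i,f)\|_{C^{1+\alpha},x_i,4\delta_0}\leq C_\alpha$ guarantees that in the exponential chart at $x_i$ the piece of $\cW^c(p)$ through $x_i$ is a $C^{1+\alpha}$ graph $g_i$ from an open subset of $\tilde E^c(x_i)$ into $\tilde E^{su}(x_i)$, with a uniform $C^{1+\alpha}$ bound. Mollify $g_i$ by convolution at scale $\rho=\rho(\epsilon)$ to obtain a $C^\infty$ graph $\tilde g_i$ satisfying $\|\tilde g_i-g_i\|_{C^1}\lesssim C_\alpha\rho^\alpha$, the same $C^{1+\alpha}$ bound, and $\|\tilde g_i\|_{C^2}\lesssim C_\alpha\rho^{\alpha-1}$. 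Choosing $\rho$ small enough gives $\epsilon/2$-closeness in $C^1$ and a $C^2$ bound $\tilde C(\epsilon)$ depending only on $\epsilon$. Next choose a smooth partition of unity $\{\psi_i\}$ subordinate to $\{B(x_i,\delta_0/2)\}$ with $C^2$ norms controlled only in terms of $\delta_0$, and pull it back to the tubular neighborhood along $\tilde h^{su}$. For $y\in\cW^c_R(p)$ and every index $i$ with $\psi_i(y)\neq 0$, the smoothed graph $\Gamma_i=\mathrm{Graph}(\tilde g_i)$ meets the transversal $\exp_y(B_{\epsilon_1}\tilde E^{su}(y))$ in a unique point $z_i(y)$ (by Lemma \ref{le:ps}, since $\tilde g_i$ is a small perturbation of $g_i$). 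Set $\tilde y=\sum_i\psi_i(y)\,z_i(y)$ in the exponential chart of this transversal leaf, and define $\tilde\cW^c_{R,\epsilon}(p)=\{\tilde y: y\in\cW^c_R(p)\}$.

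The map $\tilde h^{su}_\epsilon:\tilde y\mapsto y$ is then the restriction of the smooth $\tilde h^{su}$-holonomy, and the implicit function theorem shows it is a local diffeomorphism because $\tilde\cW^c_{R,\epsilon}(p)$ remains uniformly transverse to the $\tilde E^{su}$-leaves. The uniform $C^{1+\alpha}$ bound $\tilde C_\alpha$ and the $\epsilon$-dependent $C^2$ bound $\tilde C(\epsilon)$ are inherited from the uniform bounds on each $\tilde g_i$, on the smooth $\tilde h^{su}$-foliation and on $\{\psi_i\}$, together with the fact that only a uniformly bounded number of $\psi_i$'s can be simultaneously nonzero (a consequence of the $\delta_0/4$-spacing of the net and of the local product structure); none of these bounds depend on $p,R,\epsilon$ or $f\in\mathcal U(f)$. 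The main obstacle is this patching step: for large $R$ and center leaves with complicated topology or near-returns, one must be sure that the convex-combination construction does not create spurious self-intersections and that the $C^2$ bound does not blow up. Both are controlled because the averaging happens inside a single smooth $\tilde h^{su}$-leaf, within which all candidate points $z_i(y)$ already lie in a $\rho(\epsilon)$-ball, so the combination is well defined, smooth, and involves only finitely many sheets whose number is bounded in terms of $\epsilon_0,\delta_0$ alone.
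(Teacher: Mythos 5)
Your proposal takes a genuinely different route: the paper replaces the leaf piece by piece in a $k$-step induction over finitely many $\cW^c$-foliation charts, smoothing each plaque with a fixed bump function $\rho$ of the linear chart coordinate and growing the smooth region at each step, whereas you mollify once on a $\delta_0/4$-net and try to glue in a single pass with a partition of unity along the $\tilde E^{su}$-tube. The one-shot strategy is attractive, and your handling of bounded overlap and the $C^2$ estimate is plausible. However, there is a genuine gap that your sketch does not address: the regularity of the glued object.

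The issue is in the gluing formula $\tilde y=\sum_i\psi_i(y)\,z_i(y)$. You take $\psi_i$ subordinate to balls on $\cW^c_R(p)$ and pull them back to the tubular neighborhood via $\tilde h^{su}$. But $\tilde h^{su}$ projects onto $\cW^c_R(p)$, which is only $C^{1+\alpha}$, so the pulled-back functions (and, equivalently, the map $L\mapsto \psi_i(y(L))$ from the smooth leaf space of the $\tilde E^{su}$-foliation) are only $C^{1+\alpha}$, not $C^\infty$. Each $z_i$ is smooth as a function of the leaf $L$, but the weights are not, so the combination $L\mapsto\sum_i\psi_i(y(L))\,z_i(L)$ is only $C^{1+\alpha}$, and the resulting $\tilde\cW^c_{R,\epsilon}(p)$ is at best $C^{1+\alpha}$ -- it does not meet the required $C^\infty$ (even $C^2$) regularity and the bound $\tilde C(\epsilon)$. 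The fix is to define $\psi_i$ directly as smooth functions on the leaf space of the $\tilde E^{su}$-foliation (equivalently, as smooth functions of the base coordinate in each exponential chart, constant along $\tilde E^{su}$-leaves) rather than as functions living on $\cW^c_R(p)$ itself; this is exactly the role of the paper's bump function $\rho$, which is a smooth function of the linear coordinate of the chart, so that the interpolation $\rho\gamma'+(1-\rho)\gamma$ is manifestly smooth wherever $\gamma$ already is. If you make that change, and also fix a smooth reference chart for the averaging inside each transversal leaf (rather than $\exp_y$ with $y$ moving over the nonsmooth $\cW^c_R(p)$), your one-shot construction should go through with essentially the same bookkeeping. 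A secondary imprecision: the candidate points $z_i(y)$ lie in a ball of size on the order of $C_\alpha\rho(\epsilon)^\alpha$, not $\rho(\epsilon)$; this does not affect the argument but should be stated correctly.
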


\begin{proof}

Cover $M$ by a finite number of foliation charts of $\cW^c$ with center leaves of size $\delta_0$, say $U_1,U_2,\dots,U_k$. Then $W^c_R(p)$ is covered by finitely many leaves $\cW^c_{\delta_0}(x_i), 1\leq i\leq K$ from these foliation charts. Let $B_i=\cW^c_{3\delta_0}(x_i)$ and $\cW^c_R(p)\subset\cW_0=\cup_{i=1}^KB_i$.

Each $B_i$ is (contained in) the graph of a function $\gamma_i:B_{4\delta_0}E^c(x_i)\rightarrow E^{c^{\perp}}(x_i)$ with uniform $C^{1+\alpha}$ bounds. We will use the following standard regularization procedure.

Suppose that $\gamma:U\rightarrow E^{\perp}$ is $C^{1+\alpha}$, $B_{3\delta_0}E\subset U\subset E$, and it is also $C^{\infty}$ on some subset $V\subset U$. For any $\epsilon>0$ sufficiently small we can use the standard regularization and obtain $\gamma'$ which is $C^{\infty}$ and $C^{1}$ close to $\gamma$. Let $\rho$ be a smooth bump function which is one on $B_{2\delta_0}E$ and zero outside $B_{3\delta_0}E$. Use $\rho$ to interpolate between $\gamma'$ and $\gamma$ and obtain a new function $\tilde\gamma$ which is $C^{1+\alpha}$ on $B_{4\delta_0}E$, $C^{\infty}$ on $B_{2\delta_0}E\cup V$ and satisfies
\begin{itemize}
\item $\|\tilde\gamma-\gamma\|_{C^1}\leq\epsilon^{\alpha}\|\gamma\|_{C^{1+\alpha}}$ on $U$,
\item $\|\tilde\gamma\|_{C^{1+\alpha}}\leq 2\|\gamma\|_{C^{1+\alpha}}$ on $U$,
\item $\|\tilde\phi|_{B_{2\delta_0}E\cup V}\|_{C^2}\leq C(\epsilon)\max\{\|\phi\|_{C^{1+\alpha}},\|\phi|_V\|_{C^2}\}$,
\item $\tilde\phi=\phi$ outside $B_{3\delta_0}E$,
\end{itemize}
where $C(\epsilon)>2$ depends only on $\epsilon$ (and $\rho$).

We proceed with perturbing the leaves in $U_1$. Let $I_j=\{1\leq i\leq k: \cW^c_{\delta_0}(x_i)\in U_j\} $. By performing the perturbation described above to each $B_i, i\in I_1$ we obtain new submanifolds $B_i^1$ which are graphs of the functions $\gamma_i^1$ in exponential charts at $x_i$. The holonomy $\tilde h^{su}_1$ of length smaller that $2\epsilon^{\alpha} C_{\alpha}$ (in the exponential chart at $x_i$) is well defined between $B_i$ and $B_i^1$. Let
$$\cW_1=\cW_0\setminus(\cup_{i\in I_1}B_i)\cup(\cup_{i\in I_1}B_i^1).$$
The $\cW_1$ is an immersed submanifold of $M$, possible with self-intersections. Observe that we can extend $\tilde h^{su}_1$ as the identity outside $\cup_{i\in I_1}B_i^1$ and obtain a local diffeomorphism between $\cW_0$ and $\cW_1$. Let $A_1=\cup_{i\in I_1}Gr\left(\gamma_i^1|_{B_{2\delta_0}E^c(x_i)}\right)$ (the smooth part of $\cW_1$). For $i\notin I_1$ define $B_i^1=\tilde h^{su}_1(B_i)\subset \cW_1$. Then $\cW_1$ is the union of $B_i^1$, and each $B_i^1$ is related to $B_i$ by $\tilde h^{su}_1$. If $\pi_i$ is the projection on the first coordinate in the exponential chart at $x_i$, then $B_i^1$ is the graph of a function $\gamma_i^1:\pi_i(B_i^1)\rightarrow E^{c^{\perp}}(x_i)$ satisfying
\begin{itemize}
\item $B_{4\delta_0-\epsilon_1}E^c(x_i)\subset \pi_i(B_i^1)\subset B_{4\delta_0+\epsilon_1}E^c(x_i)$,
\item $\|\gamma_i-\gamma_i^1\|_{C^1}\leq\epsilon_1/2$,
\item $\|\gamma_i^1\|_{C^{1+\alpha}}\leq 2C_MC_{\alpha}$,
\item $\|\gamma_i^1|_{\pi_i(B_i^1\cap A_1)}\|_{C^2}\leq C(\epsilon)C_MC_{\alpha}$,
\end{itemize}
where $C_M$ depends on the Riemannian manifold $M$ and $\delta_0$ (measures the size of the change of coordinates between nearby exponential charts) and $\epsilon_1=2\epsilon^{\alpha}C_MC_{\alpha}$. Furthermore each $B_i$ is diffeomorphic to $B_i^1$ by the holonomy $\tilde h^{su}_1$ of length less that $\epsilon_1$. Observe that if $\epsilon$ is small enough so $\cW_1$ stays $C^1$ close to $\cW^c$, then we have that $A_1$ contains $\cup_{i\in I_1}\tilde h^{su}_1\left(\cW^c_{2\delta_0-2\epsilon_1}(x_i)\right)$.

Now we proceed with perturbing the leaves corresponding to $U_2$. In a similar manner we obtain a submanifold $\cW_2$ related to $\cW_1$ by the holonomy $\tilde h^{su}_2$ of length $\epsilon_2=4\epsilon^{\alpha}C_M^2C_{\alpha}$, and containing a smooth part $A_2\subset \cW_2$. $\cW_2$ is the union of $B_i^2=\tilde h^{su}_2(B_i^1)$, and each $B_i^2$ is the graph of a function $\gamma_i^2:\pi_i(B_i^2)\rightarrow E^{c^{\perp}}(x_i)$ satisfying
\begin{itemize}
\item $B_{4\delta_0-\epsilon_1-\epsilon_2}E^c(x_i)\subset \pi_i(B_i^2)\subset B_{4\delta_0+\epsilon_1+\epsilon_2}E^c(x_i)$,
\item $\|\gamma_i^1-\gamma_i^2\|_{C^1}\leq\epsilon_2/2$,
\item $\|\gamma_i^2\|_{C^{1+\alpha}}\leq 4C_M^2C_{\alpha}$,
\item $\|\gamma_i^2|_{\pi_i(B_i^1\cap A_2)}\|_{C^2}\leq C(\epsilon)^2C_M^2C_{\alpha}$.
\end{itemize}
If $\epsilon$ is small enough so that $\cW_2$ stays $C^1$ close to $\cW^c$, then
$$\cup_{i\in I_1\cup I_2}\tilde h^{su}_2\circ\tilde h^{su}_1\left(\cW^c_{2\delta_0-2\epsilon_1-2\epsilon_2}(x_i)\right)\subset A_2.$$

Continue by induction perturbing on each $U_i$ until we reach $U_k$. We get a submanifold $\cW_k$ related to $\cW_{k-1}$ by the holonomy $\tilde h^{su}_k$ of length $\epsilon_k=2^k\epsilon^{\alpha}C_M^kC_{\alpha}$, and containing a smooth part $A_k\subset \cW_k$. In particular $\cW_k$ is related to $\cW_0$ by the holonomy $\tilde h^{su}$ of length $\tilde\epsilon=\sum_{i=1}^k\epsilon_i$. Furthermore  $\cW_k$ is the union of $B_i^k=\tilde h^{su}_k(B_i^{k-1})$, and each $B_i^k$ is the graph of a function $\gamma_i^k:\pi_i(B_i^k)\rightarrow E^{c^{\perp}}(x_i)$ satisfying
\begin{itemize}
\item $B_{4\delta_0-\tilde\epsilon}E^c(x_i)\subset \pi_i(B_i^k)\subset B_{4\delta_0+\tilde\epsilon}E^c(x_i)$,
\item $\|\gamma_i^{k-1}-\gamma_i^k\|_{C^1}\leq\epsilon_k/2$, so $\|\gamma_i-\gamma_i^k\|_{C^1}\leq\tilde\epsilon/2$,
\item $\|\gamma_i^k\|_{C^{1+\alpha}}\leq 2^kC_M^kC_{\alpha}$,
\item $\|\gamma_i^k|_{\pi_i(B_i^1\cap A_k)}\|_{C^2}\leq C(\epsilon)^kC_M^kC_{\alpha}$.
\end{itemize}
If $\epsilon$ is small enough then we also have
$$\cup_{i\in I}\tilde h^{su}_k\circ\dots\circ\tilde h^{su}_1\left(\cW^c_{2\delta_0-2\tilde\epsilon}(x_i)\right)\subset A_k.$$

We can make this construction until the end for any $\epsilon$ small enough such that $\tilde\epsilon<\delta_0/2$ (and $\cW_k$ is close to $\cW^c$ so the estimates on the smooth part hold).

If $x=(a,\gamma_i^k(a))\in\cW^k$ (in a chart at $x_i$), let $x_0=(a, \gamma_i(a))\in\cW^c(x-i)$. We have $d(E^c(x),E^c(x_0))\leq C_{\alpha}\tilde\epsilon^{\alpha}$ and $d(E^c(x_0),Gr(D\gamma_i^k(a)))\leq \tilde\epsilon$, so $\cW_k$ is tangent to $\Delta^c_{\tilde\epsilon+\tilde\epsilon^{\alpha}C_{\alpha}}$.

Since $\lim_{\epsilon\rightarrow 0}\tilde\epsilon+\tilde\epsilon^{\alpha}C_{\alpha}=0$, the conclusions of the lemma hold with $\overline\epsilon=\tilde\epsilon+\tilde\epsilon^{\alpha}C_{\alpha}$, $\tilde\cW^c_{R,\overline\epsilon}(p)=\cW_k$, $\tilde h^{su}_{\overline\epsilon}=\tilde h^{su}_k\circ\dots\circ\tilde h^{su}_1$, $\tilde C_{\alpha}=2^kC_M^{k+1}C_{\alpha}$ and $\tilde C(\overline\epsilon)=C(\epsilon)^kC_M^{k+1}C_{\alpha}$.

\end{proof}


\subsubsection{Smooth uniform approximation of center-unstable manifolds and of unstable foliation}
The second step is to use the smooth approximation of the center in order to construct smooth approximations of local center-unstable pieces together with a subfoliation close to the unstable one.

Fix a smooth global approximation $\tilde E^{u}$ of $E^u$, say within $\Delta^u_{\frac{\epsilon_0}10}$. We know from the previous step that $\tilde\cW^c_{R,\epsilon}(p)$ are uniformly $C^{1+\alpha}$ for all $p,R,\epsilon, f$. Then there exists $0<\delta_{\cF}<\min\{\delta_0,\delta_p\}$ such that, for every $p,R,\epsilon, f$, the family $\{\exp(B_{\delta_\cF}\tilde E^{u}(x)):\ x\in\tilde\cW^c_{R,\epsilon}(p)\}$ foliates a smooth submanifold inside a tubular neighborhood of $\tilde\cW^c_{R,\epsilon}(p)$; we denote this submanifold $\tilde\cW^{cu}_{R,\epsilon}(p)$, and the foliation $\cF^u_{R,\epsilon,p}$. By assuming that $\epsilon<\frac{\epsilon_0}{10}$ and eventually making $\delta_{\cF}$ smaller we have that $\tilde\cW^{cu}_{R,\epsilon}(p)$ is tangent to $\Delta^{cu}_{\epsilon_0}$ and $\cF^u_{R,\epsilon,p}$ is tangent to $\Delta^u_{\epsilon_0}$. We also have that $\tilde\cW^{cu}_{R,\epsilon}(p)$ and $\cF^u_{R,\epsilon,p}$ are uniformly $C^r$, $r\geq 2$ with respect to $p,R,f$ (the $C^r$ bounds do however depend on $\epsilon$).

\begin{lemma}\label{le:straight}
For any $\epsilon>0$ small enough there exists a constant $C_{\phi}(\epsilon)>0$ such that for every $p, R, f$ and any $x\in\tilde \cW^c_{R,\epsilon}(p)$, there exists a $\delta_{\cF}$-linear parametrization $\phi$ of ($\tilde\cW^c_{R,\epsilon}(p),\cF^u_{R,\epsilon,p}$) at $x$ with $\|\phi\|_{C^2},\|\phi^{-1}\|<C_{\phi}(\epsilon)$.
\end{lemma}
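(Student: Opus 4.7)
The plan is to construct $\phi$ explicitly as a two-step composition: first a graph parametrization of the smooth approximate center $\tilde\cW^c_{R,\epsilon}(p)$ at $x$, and then the exponential map along the globally smooth vector field approximation $\tilde E^u$. (I read the statement as referring to $(\tilde\cW^{cu}_{R,\epsilon}(p), \cF^u_{R,\epsilon,p})$, since that is the pair where $\cF^u_{R,\epsilon,p}$ is a subfoliation.) Concretely, let $\psi : B_{\delta}(0) \subset T_x\tilde\cW^c_{R,\epsilon}(p) \to \tilde\cW^c_{R,\epsilon}(p)$ be the local graph parametrization at $x$ given by the previous lemma, and let $\{e_1(y),\dots,e_{\dim u}(y)\}$ be a smooth orthonormal frame of $\tilde E^u(y)$ (obtained by Gram--Schmidt from the globally smooth bundle $\tilde E^u$). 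Then define
\[
\phi(a,b) \;=\; \exp_{\psi(a)}\Bigl(\sum_i b_i\, e_i(\psi(a))\Bigr).
\]
By construction $\phi$ sends $\{b=0\}$ into $\tilde\cW^c_{R,\epsilon}(p)$ and, for each fixed $a$, sends $\{a\}\times B_{\delta_\cF}$ into the leaf $\cF^u_{R,\epsilon,p}$ through $\psi(a)$, so $\phi$ simultaneously straightens $\tilde\cW^{cu}_{R,\epsilon}(p)$ and its unstable subfoliation.

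Next I would collect the $C^2$ bounds. From the previous lemma, $\|\psi\|_{C^2} \leq \tilde C(\epsilon)$ uniformly in $p,R,f$. The bundle $\tilde E^u$ is a fixed global smooth approximation, so the frame $\{e_i\}$ has $C^2$ bounds depending only on $M$, independent of $p,R,\epsilon,f$. The exponential map of $M$ is smooth with bounds depending only on $M$ on a neighborhood of the zero section. By the standard chain and Leibniz estimates the composition $\phi$ satisfies $\|\phi\|_{C^2}\leq C_\phi(\epsilon)$ for some constant of the form $C_\phi(\epsilon) = P(\tilde C(\epsilon))$ where $P$ is a polynomial depending only on $M$ and on the smooth objects $\tilde E^u,\exp$.

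For the inverse bound, at the base point $D\phi(0)$ acts as $(a,b)\mapsto a + \sum b_i e_i(x)$ with $a\in T_x\tilde\cW^c_{R,\epsilon}(p)$ and $\sum b_i e_i(x)\in \tilde E^u(x)$. The two summands span $T_x\tilde\cW^{cu}_{R,\epsilon}(p)$, and by our choice of $\epsilon_0$ and $\mathcal U(f)$ they are uniformly transverse (both sit inside $\Delta^c_{\epsilon_0}$ and $\Delta^u_{\epsilon_0}$ respectively, whose transversality at scale $\delta_0$ is bounded by $2$). Hence $\|D\phi(0)^{-1}\|$ admits a uniform bound independent of $p,R,\epsilon,f$. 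Applying a quantitative inverse function theorem to $\phi$, whose $C^2$-norm is controlled by $C_\phi(\epsilon)$, yields a uniform radius on which $\phi$ is a $C^2$-diffeomorphism onto its image with $\|\phi^{-1}\|_{C^1} \leq C_\phi(\epsilon)$ after possibly enlarging $C_\phi(\epsilon)$. Shrinking $\delta_\cF$ once and for all (still uniformly in $p,R,f$) guarantees that $\phi$ is defined on and covers the balls of radius $\delta_\cF$ required by the definition.

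The main obstacle I anticipate is bookkeeping the $\epsilon$-dependence without letting any hidden constant depend on $p,R,f$. The crucial input that makes this work is that the previous lemma already gives a \emph{uniform} $C^2$ bound $\tilde C(\epsilon)$ on the center approximation, and that the only other ingredients ($\tilde E^u$, $\exp$, and the transversality of the cone fields) are fixed objects of $M$. Once one checks these two points carefully, the construction and the inverse function theorem argument produce the desired $C_\phi(\epsilon)$ in a routine way.
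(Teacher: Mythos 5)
Your proposal matches the paper's proof in all essentials: you build $\phi$ by first parametrizing $\tilde\cW^c_{R,\epsilon}(p)$ as a graph (carrying the uniform $C^2$ bound $\tilde C(\epsilon)$ from the preceding lemma), then pushing off in the direction of the fixed smooth approximate bundle $\tilde E^u$ via a smooth parametrization of the exponential fans; the $C^2$ bound follows from the chain rule and the $C^1$ inverse bound from the block structure of $D\phi$ at the base point and the uniform transversality of the cones. This is exactly the construction in the paper (the paper's map $\alpha$ plays the role of your $\exp_{\psi(a)}(\sum_i b_i e_i(\psi(a)))$), and your reading of the pair as $(\tilde\cW^{cu}_{R,\epsilon}(p),\cF^u_{R,\epsilon,p})$ is the intended one. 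The only point to tighten: a linear parametrization in the sense of the paper is a local diffeomorphism of $\mathbb R^d$ (all of the ambient tangent space), not just a parametrization of the $(\dim E^c+\dim E^u)$-dimensional submanifold $\tilde\cW^{cu}_{R,\epsilon}(p)$; this is needed because Lemma~\ref{le:cf} pushes forward tangent data that need not lie in $T\tilde\cW^{cu}$. The paper handles this by adding the linear term $+y_3$ in the orthogonal complement $E_3=(T_x\tilde\cW^c\oplus\tilde E^u_x)^\perp$; your construction extends to a full-dimensional chart by the same device, with no effect on the bounds. With that small completion your argument is the paper's argument.
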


\begin{proof}
For simplicity of the notations we will work in an exponential chart at $x$, and we will make an abuse of notations using the same notation for the objects in $M$ and in the exponential chart.

Choose a decomposition $E_1\oplus E_2\oplus E_3=\mathbb R^d(=T_xM)$ with $E_1=T_x\tilde\cW^c_{R,\epsilon}(p)$, $E^2=\tilde E^u_x$ and $E_3$ orthogonal on $E_1,E_2$. Let $\alpha:B_{\delta_{\cF}}\mathbb R^{d}\times B_{\delta_{\cF}}E_2\rightarrow\mathbb R^d$ be a smooth parametrization of the family $\{\exp(B_{\delta_\cF}\tilde E^{u}(y)):\ y\in B_{\delta_{\cF}}\mathbb R^{d}\}$, in other words $\alpha(y,\cdot)$ is a parametrization of $exp(B_{\delta_\cF}\tilde E^{u}(y))$. We can assume that $\alpha(y,0)=y$, so $D_y\alpha(y,0)=Id_{\mathbb R^d}$, and $D_{y_2}\alpha(y,0)$ is a linear map from $E_2$ to $\tilde E^u_y$, uniformly bounded from zero and infinity. In particular $D_{y_2}\alpha(0,0)$ is an automorphism of $E_2$. The map $\alpha$ is $C^{\infty}$ and its size depends only on the Riemannian structure on $M$ and the choice of $\tilde E^u$.

Let $\gamma:B_{\delta_{\cF}}E_1\rightarrow E_2\oplus E_3$ be a smooth function such that its graph is the local manifold $\tilde\cW^c_{R,\epsilon}(p)$ in a neighborhood of $x$. Then $\gamma$ has the $C^{1+\alpha}$ size bounded by $2\tilde C_{\alpha}$ and its $C^2$ size bounded by $2\tilde C(\epsilon)$.

Let $\phi:B_{\delta_{\cF}}\mathbb R^d\rightarrow\mathbb R^d$,
$$
\phi(y_1,y_2,y_3)=(\alpha((y_1,\gamma(y_1)),y_2)+y_3.
$$
It is clear from the definition that $\phi$ is a $\delta_{\cF}$-linear parametrization $\phi$ of ($\tilde\cW^c_{R,\epsilon}(p),\cF^u_{R,\epsilon,p}$) at $x$. The $C^2$ size of $\phi$ is bounded by some $C_{\phi}(\epsilon)$ which depends on $\tilde C(\epsilon)$, the Riemannian structure of $M$ and the choice of $\tilde E^u$. The $C^{1+\alpha}$ size of $\phi$ is bounded by some constant which depends on $\tilde C(\alpha)$, the Riemannian structure of $M$ and the choice of $\tilde E^u$. 

We have
$$
D\phi(0)=\begin{bmatrix}
Id_{E_1} & 0 & 0\\
0 & D_{y_2}\alpha(0,0) & 0\\
0 & 0 & Id_{E_3}
\end{bmatrix}.
$$
The determinant is uniformly bounded away from zero, so eventually readjusting $\delta_F$ we have that the $C^1$ size of $\phi^{-1}$ is uniformly bounded. This finishes the proof.
\end{proof}

We claim that $\cW^u_{\delta}\cW^c_{R-r}(p)$ and $\tilde\cW^{cu}_{R,\epsilon}(p)$ are related by local stable holonomy for some $r>0$ and $\epsilon,\delta$ sufficiently small.

\begin{lemma}\label{le:hs}
Suppose that $\epsilon$ is small enough and $r-2\epsilon>\delta_{\cF}$, $\delta+\epsilon<\frac{\delta_{\cF}}4$. Then for any $p,R,f$, with $R>r$, the stable holonomy of size $2(\delta+\epsilon)$ gives a local homeomorphism from $\cW^u_{\delta}\cW^c_{R-r}(p)$ to (a subset of) $\tilde\cW^{cu}_{R,\epsilon}(p)$.
\end{lemma}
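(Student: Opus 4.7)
The plan is to intersect, for each $q\in\cW^u_\delta\cW^c_{R-r}(p)$, the local stable manifold through $q$ with a suitable piece of $\tilde\cW^{cu}_{R,\epsilon}(p)$ via the local product structure of Lemma \ref{le:ps}, and then verify that the resulting map is continuous and locally invertible. Given $q$, write $q=h^u(y)$ with $y\in\cW^c_{R-r}(p)$ and $d(y,q)\le\delta$. The smooth approximation of the previous subsection supplies $\tilde y:=\tilde h^{su}(y)\in\tilde\cW^c_{R,\epsilon}(p)$ with $d(y,\tilde y)\le\tilde\epsilon\le\epsilon$ (for $\epsilon$ sufficiently small, after absorbing constants as in the previous lemma); hence $d(\tilde y,q)\le\delta+\epsilon$.

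Next, I would verify that $\tilde\cW^{cu}_{R,\epsilon}(p)$ contains a $C^1$ piece of size at least $2(\delta+\epsilon)$ around $\tilde y$, tangent to $\Delta^{cu}_{\epsilon_0}$. Since $y$ lies at center-distance at least $r$ from $\partial\cW^c_R(p)$ and $\tilde h^{su}$ displaces points by at most $\tilde\epsilon\le\epsilon$, the image $\tilde y$ has a center-neighborhood in $\tilde\cW^c_{R,\epsilon}(p)$ of radius at least $r-\epsilon>\delta_{\cF}$, invoking the hypothesis $r-2\epsilon>\delta_{\cF}$. Foliating this center-disc by the smooth unstable plaques $\exp(B_{\delta_{\cF}}\tilde E^u(\cdot))$ of the construction yields a $cu$-piece of diameter at least $\delta_{\cF}>2(\delta+\epsilon)$ inside $\tilde\cW^{cu}_{R,\epsilon}(p)$, using $\delta+\epsilon<\delta_{\cF}/4$. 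With $\delta'=\delta+\epsilon<\delta_p$, Lemma \ref{le:ps} applied to this $cu$-piece at $\tilde y$ and to the local stable manifold $\cW^s_{2(\delta+\epsilon)}(q)$ (which is $C^1$, tangent to $\Delta^s_{\epsilon_0}$, and of the required size) produces a unique transverse intersection $q':=h^s_{\mathrm{loc}}(q)\in\tilde\cW^{cu}_{R,\epsilon}(p)$, with $d(q,q')\le 2(\delta+\epsilon)$.

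The map $q\mapsto q'$ is continuous because transverse intersections depend continuously on their defining manifolds, which in turn vary continuously in the $C^1$-topology with $q$ (and with the underlying $y,\tilde y$). It is locally injective: two distinct points of $\cW^u_\delta\cW^c_{R-r}(p)$ cannot share a short local stable leaf, since that set is tangent to $\Delta^{cu}_{\epsilon_0}$ and hence transverse to $\Delta^s_{\epsilon_0}$, so Lemma \ref{le:ps} again forces at most one intersection with a short stable manifold. The symmetric construction (intersecting $\cW^u_\delta\cW^c_{R-r}(p)$ with a short stable leaf through a point of $\tilde\cW^{cu}_{R,\epsilon}(p)$) produces a continuous inverse on the image, so $q\mapsto q'$ is a local homeomorphism. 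The main bookkeeping hurdle is controlling the displacement $\tilde\epsilon$ of $\tilde h^{su}$ and ensuring $\tilde y$ sits deep enough inside $\tilde\cW^c_{R,\epsilon}(p)$; the two hypotheses $r-2\epsilon>\delta_{\cF}$ and $\delta+\epsilon<\delta_{\cF}/4$ are precisely what make the $cu$-disc around $\tilde y$ and the stable manifold at $q$ of matching size fit together within the scale $\delta_p$ of the local product structure.
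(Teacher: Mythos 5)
Your argument is essentially identical to the paper's: map $y\in\cW^c_{R-r}(p)$ to $\tilde y=\tilde h^{su}(y)\in\tilde\cW^c_{R,\epsilon}(p)$, check that the approximate center and center-unstable manifolds have enough size at $\tilde y$ via $r-2\epsilon>\delta_{\cF}$, and then apply Lemma~\ref{le:ps} to intersect $\cW^s_{2(\delta+\epsilon)}(q)$ with the local $cu$-piece. The only slip is that the center-disc radius at $\tilde y$ should be estimated as $r-2\epsilon$ (both the point and the boundary of the approximated leaf move by up to $\epsilon$), not $r-\epsilon$ as you wrote, but since you invoke the hypothesis $r-2\epsilon>\delta_{\cF}$ anyway this is harmless.
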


\begin{proof}
Let $x\in\cW^c_{R-r}(p)$ and $y\in \cW^u_{\delta}(x)$. Let $x'=\tilde h^{su}_{\epsilon}(x)\in\tilde\cW^c_{R,\epsilon}(p)$, so $d(x,x')<\epsilon$. Then $\tilde\cW^c_{R,\epsilon}(p)$ has size at $x'$ at least $r-2\epsilon>\delta_{\cF}$. This implies that $\tilde\cW^{cu}_{R,\epsilon}(p)$ has size at $x'$ at least $\frac{\delta_{\cF}}2$. On the other hand $d(y,x')\leq\delta+\epsilon<\frac{\delta_{\cF}}4$. The local product structure from Lemma \ref{le:ps} implies that $\cW^s_{2(\delta+\epsilon)}(y)$ intersects transversely the disk centered at $x'$ of size $2(\delta+\epsilon)<\frac{\delta_{\cF}}2$ in $\tilde\cW^{cu}_{R,\epsilon}(p)$ in a point $h^s_{2(\delta+\epsilon)}(y)$. Then $h^s_{2(\delta+\epsilon)}(y)$ is a local homeomorphism from $\cW^u_{\delta}\cW^c_{R-r}(p)$ to $\tilde\cW^{cu}_{R,\epsilon}(p)$.
\end{proof}

\section{Proofs}

We divide the proof of Theorem \ref{th:t1} in several steps.

\subsection{Approximation of the unstable holonomies: construction of $h^n_{p,q}$}

We start with the construction of an approximation of the unstable holonomy inside center-unstable leaves. From now on we fix $r=2\delta_{\cF}$ and $0<\epsilon=\delta<\frac{\delta_{\cF}}{10}$ small enough so that all the conclusions from sub-section \ref{ss:app} hold.

Let $p\in M$, $q\in\cW^u_{\delta}(p)$, $x\in\cW^c_{\delta}(p)$, $z=h^u_{p,q}(x)\in\cW^c_{2\delta}(q)$ and $R_n=3{\delta}\sup_{x\in M}\|Df^{-n}|_{E^c}\|+r$. We start iterating back by $f^{-n}$. Denote 
$$\cW_n=\cW^u_{\delta}\cW^{c}_{R_n-r}(f^{-n}(p)).$$
Observe that $f^{-n}(\cW^c_{2\delta}(p),f^{-n}(\cW^c_{2\delta}(p)\subset\cW_n$.

As in the previous section we consider the approximation $\tilde\cW^{cu}_{R_n,\epsilon}(f^{-n}(p)):=\tilde\cW_n$ and its subfoliation $\cF_{R_n,\epsilon,f^{-n}(p)}:=\cF_n$. Lemma \ref{le:hs} implies that the stable holonomy of size (smaller than) $4\delta$, $h^s_{4\delta}:\cW_n\rightarrow\tilde\cW_n$, is a local homeomorphism. Denote $\tilde*_n=h^s_{4\delta}(f^{-n}(*))\in\cW^n$ for $*\in\{p,q,x,z\}$.

Let $T_{\tilde p_n}=h^s_{4\delta}(f^{-n}(\cW^c_{2\delta}(p)))$ and $T_{\tilde q_n}=h^s_{4\delta}(f^{-n}(\cW^c_{2\delta}(q)))$, they are $C^{1+\alpha}$ transversals to the foliation $\cF_n$ in $\tilde\cW_n$ (they are in fact tangent to $\Delta^c_{\epsilon}$).

Now we iterate $\tilde\cW_n$ and $\cF_n$ forward by $f^n$. Denote $*_n=f^n(\tilde *_n)=h^s_{4\lambda_s^n\delta}(*)$,  for $*\in\{p,q,x,z\}$ (the stable holonomy commutes with $f$ and is uniformly contracted). Also denote $T_{p_n}=f^n(T_{\tilde p_n})=h^s_{4\lambda_s^n\delta}(\cW^c_{2\delta}(p))$ and $T_{q_n}=f^n(T_{\tilde q_n})=h^s_{4\lambda_s^n\delta}(\cW^c_{2\delta}(q))$, they are again $C^{1+\alpha}$ transversals to $f^n_*\cF_n$ inside $f^n\tilde\cW_n$.

The partial hyperbolicity implies that
\begin{itemize}
\item
$p_n,q_n$ and $x_n$ converge exponentially to $p,q,x$;
\item
$T_{p_n}$ and $T_{q_n}$ converge to $\cW^c_{2\delta}(p)$ and $\cW^c_{2\delta}(q)$ in the $C^1$ topology;
\item
$h^s_{4\lambda_s^n\delta}(\cW^{cu}_{loc}(p))\subset f^n\tilde\cW_n$ converges to $\cW^{cu}_{loc}(p)$ in the $C^1$ topology;
\item
$f^n_*\cF_{n,loc}$ converges to $\cW^u_{loc}$ in the following sense: if $a_n$ converges to $a$ then $f^n_*\cF_{n,loc}(a_n)$ converges to $\cW^u_{loc}(a)$ in the $C^1$ topology.
\end{itemize}

Let $T_{p_n}'=h^s_{4\lambda_s^n\delta}(\cW^c_{\delta}(p))\subset T_{p_n}$. Then for $n$ sufficiently large there exists a well defined holonomy of the foliation $f^n_*\cF_n$ between the transversals $T_{p_n}'$ and $T_{q_n}$. We denote this holonomy $h_{p_n,q_n}^{f^n_*\cF_n}$ and observe that it is $C^{1+\theta}$.

Define $h^n_{p,q}:\cW^c_{\delta}(p)\rightarrow\cW^c_{2\delta}(q)$,
\begin{equation}
h^n_{p,q}=h^{s^{-1}}_{4\lambda_s^n\delta}\circ(h^{f^n_*\cF_n}_{p_n, q_n})\circ h^s_{4\lambda_s^n\delta},
\end{equation}
In other words, in order to obtain $h^n_{p,q}(x)$ for $x\in\cW^c_{\delta}(p)$, we move with the stable holonomy of size $4\lambda_s^n\delta$ to $T_{p_n}'\subset f^n\tilde\cW_n$, then we move with the holonomy given by the foliation $f_*^n\cF_n$ of $f^n(\tilde\cW_n)$ between the $C^1$ transversals $T_{p_n}'$ and  $T_{q_n}$, and then we move back by the stable holonomy of size $4\lambda_s^n\delta$ to $\cW^c_{2\delta}(q)$.

Clearly $h^n_{p,q}$ is continuous, since the stable holonomies are H\"older continuous, while the holonomy $h^{f^n_*\cF_n}_{p_n,q_n}$ is $C^{1+\theta}$.

\subsection{$h^n_{p,q}$ converges uniformly to $h^u_{p,q}$.}

This follows immediately from the remarks in the previous section.

\subsection{$h^u_{p,q}$ is Lipschitz.}

We first show that $Dh^{f^n_*\cF_n}_{p_n,q_n}(x_n)$ is bounded uniformly in $n,x_n$.

Let $\tilde\Delta_n=\Delta^{cs}_{\epsilon_0}\cap T\tilde\cW_n\subset\Delta^c_{2\epsilon_0}$ be a cone field tangent to $\cW_n$. Let $E_{\tilde x_n}=E^s_{\tilde x_n}\oplus T_{\tilde x_n}\cF_n$. Since the cone fields $\Delta^*_{2\epsilon_0}$  are uniformly transverse at the scale $\delta_0$, we have $t(E_{\tilde x_n},\tilde\Delta_n,\delta)<2$ and $t(\cF_n,\tilde\Delta_n)<2$. In view of the Lemma \ref{le:straight} and Lemma \ref{le:cf} we have that $Dh^{\cF_n}$ is ($C_{\cF},\theta$)-H\"older along $\cF_n$ at $\tilde x_n$ with respect to $\tilde\Delta_n$, $E_{\tilde x_n}$ and at scale $\delta$, for some constant $C_{\cF}$ independent on $p,n,\tilde x_n$ and $g$ in $\mathcal U(f)$.

Let $\tilde\Delta_n^k=\Delta^{cs}_{\epsilon_0}\cup Tf^k\tilde\cW_n\subset\Delta^c_{2\epsilon_0}$ be a cone field tangent to $f^k\cW_n$. Observe that $\tilde\Delta_n^{k+1}\subset f_*\tilde\Delta_n^k$ because of the backward invariance of $\Delta^{cs}_{\epsilon_0}$. Since $Tf_*^k\cF_n$ stays tangent to $\Delta^u_{\epsilon_0}$, we also have uniform transversality between $\tilde\Delta_n^k$ and both $f_*^k\cF_n$ and $f_*^kE_{\tilde x_n}$ at scale $\delta$:  $t(f_*^kE_{\tilde x_n},\tilde\Delta_n^k,\delta)<2$ and $t(f_*^k\cF_n,\tilde\Delta_n^k)<2$. Due to the fact that $\cF_n$ is uniformly expanding, we can apply successively Lemma \ref{le:main} and using the bunching condition we conclude that $Dh^{f_*^n\cF_n}$ is ($C_0,\theta$)-H\"older along $f_*^n\cF_n$ at $f^n(\tilde x_n)=x_n$ with respect to $\tilde\Delta_n^n$, $f_*^nE_{\tilde x_n}$ and at scale $\delta$, for the constant $C_0=C_{\cF}+\frac{4\|Df\|_{C^{\theta}}}{(1-\mu)\|Df\|}$ independent of $p,n,\tilde x_n$. The constant also works for $g$ within the neighborhood $\mathcal U(f)$ of $f$ (eventually readjusting $\mathcal U(f)$ or $C_0$). Then  $Dh^{f^n_*\cF_n}_{p_n,q_n}(x_n)$ is bounded uniformly by some constant $L_0$, so $h^{f^n_*\cF_n}_{p_n,q_n}$ is Lipschitz with constant $L_0$ uniformly in $n$.

Now the fact that $h^u_{p,g}$ is Lipschitz is just a simple consequence of the fact that $h^{f^n_*\cF_n}_{p_n,q_n}$ are uniformly Lipschitz. We have that $d(h^{f^n_*\cF_n}_{p_n,q_n}(x_n),h^{f^n_*\cF_n}_{p_n,q_n}(x_n'))\leq L_0d(x_n,x_n')$ uniformly in $n$. Since $x_n$ converges to $x$, $x_n'$ converges to $x'$, $h^{f^n_*\cF_n}_{p_n,q_n}(x_n)$ converges to $h^u_{p,q}(x)$ and $h^{f^n_*\cF_n}_{p_n,q_n}(x_n')$ converges to $h^u_{p,q}(x')$, it follows that $d(h^u_{p,q}(x),h^u_{p,q}(x'))\leq L_0d(x,x')$.

\subsection{Estimate on the Lipschitz jet of $h^u_{p,q}$.}

Let us remind the definition of Lipschitz jets. Let $M,N$ be two metric spaces, $p\in M$, $q\in N$. Two functions $f,g:M\rightarrow N$ such that $f(p)=g(p)=q$ are equivalent if $\limsup_{x\rightarrow p}\frac{d(f(x),g(x))}{d(x,p)}=0$. The equivalence classes form the space $J(M,p,N,q)$ of Lipschitz jets at $p,q$. The distance between two Lipschitz jets is $d(J(f),J(g))=\limsup_{x\rightarrow p}\frac{d(f(x),g(x))}{d(x,p)}$, it can be infinite and is independent of the representatives $f$ and $g$. A Lipschitz jet is bounded if the distance to the jet of the constant function is finite. The space of bounded Lipschitz jets at $p,q$, $J^b(M,p,N,q)$, is a complete metric space. If $M,N$ are differentiable manifolds, then the space of differentiable Lipschitz jets at $p,q$, $J^d(M,p,N,q)$, is formed by the jets which have a representative which is differentiable. $J^d(M,p,N,q)$ is a closed subspace of $J^b(M,p,N,q)$.

For simplicity let us denote $D_{x_n}=T_{x_n}T_{p_n}$, $D_{y_n}=T_{y_n}T_{q_n}$,  where $y_n=h^{f^n_*\cF_n}_{p_n,q_n}(x_n)$, $E_{x_n}=f_*^nE_{\tilde x_n}$, $\overline h^n=h^{f^n_*\cF_n}_{p_n,q_n}$. We have
\begin{equation}\label{eq:lip}
\|D\overline h^n(x_n)-p^{E_{x_n}}_{D_{x_n},D_{y_n}}\|\leq C_0d(x_n,y_n)^{\theta}
\end{equation}
In particular we have that $D_{x_n}$ and $D_{y_n}$ converge exponentially to $E^c(x),E^c(z)$ when $n$ goes to infinity, while $E_{x_n}$ converges exponentially to $E^s(x)\oplus E^u(x)$.

We will work in an exponential chart at $p_n$, and we will make an abuse of notation keeping the notation of the points. Let $B_{p_n}, B_{q_n}$ be the balls or radius $\delta$ in $D_{p_n}, D_{q_n}$. We can choose $C^{1+\alpha}$ maps $\sigma_{p_n}:B_{p_n}\rightarrow T_{p_n}$ and $\sigma_{q_n}:B_{q_n}\rightarrow T_{q_n}$ such that
\begin{itemize}
\item
$p_n+x'-\sigma_{p_n}(x')\in E_{p_n},\ \forall x'\in B_{p_n}$;
\item
$q_n+y'-\sigma_{q_n}(y')\in E_{p_n},\ \forall y'\in B_{q_n}$.
\end{itemize}
In other words they are parametrisations of $T_{p_n}, T_{q_n}$ given by the projection from $B_{p_n}, B_{q_n}$ parallel to $E_{p_n}$. Using them we can define $g_n:T_{p_n}'\rightarrow T_{q_n}$, $g_n=\sigma_{q_n}\circ p^{E_{p_n}}_{D_{p_n},D_{q_n}}\circ\sigma_{p_n}^{-1}$. This means that $g_n$ has $Dg_n(p_n)=p^{E_{p_n}}_{D_{p_n},D_{q_n}}$. We will analyze the Lipschitz jets of $\overline h_n$ and $g_n$ at $p_n$.

We will use the notations $x_n=\sigma_{p_n}(x_n'), y_n=\sigma_{q_n}(y_n')$. We can see that
\begin{itemize}
\item
$\sigma_{p_n}(0)=p_n,\ \sigma_{q_n}(0)=q_n$;
\item
$D\sigma_{p_n}(0)=Id_{D_{p_n}},\ D\sigma_{q_n}(0)=Id_{D_{q_n}}$;
\item
$D\sigma_{p_n}(x_n')=p^{E_{p_n}}_{D_{p_n},D_{x_n}},\ D\sigma_{q_n}(y_n')=p^{E_{p_n}}_{D_{q_n},D_{y_n}}$;
\end{itemize}

Let $G_n=\sigma_{q_n}^{-1}\circ\overline h^n\circ\sigma_{p_n}-p^{E_{p_n}}_{D_{p_n},D_{q_n}}:B_{p_n}\rightarrow D_{q_n}$. We have that $G_n(0)=0$ and $\|DG_n(0)\|\leq C_0d(p_n,q_n)^{\theta}$. We have
\begin{align*}
\|DG_n(x_n')\|&=\|D\sigma_{q_n}^{-1}\circ D\overline h^n(x_n)\circ D\sigma_{p_n}(x_n')-p^{E_{p_n}}_{D_{p_n},D_{q_n}}\|\\
&=\|p^{E_{p_n}}_{D_{q_n}}\circ(D\overline h^n(x_n)-p^{E_{x_n}}_{D_{x_n},D_{y_n}})\circ p^{E_{p_n}}_{D_{x_n}}+p^{E_{p_n}}_{D_{q_n}}\circ(p^{E_{x_n}}_{D_{x_n},D_{y_n}}-p^{E_{p_n}}_{D_{x_n},D_{y_n}})\circ p^{E_{p_n}}_{D_{x_n}}\|\\
&\leq\|p^{E_{p_n}}_{D_{q_n}}\|\cdot \|p^{E_{p_n}}_{D_{x_n}}\|\cdot(C_0d(x_n,y_n)^{\theta}+2d(E_{p_n},E_{x_n}))\\
&\leq4(C_0d(x_n,y_n)^{\theta}+2d(E_{p_n},E_{x_n})).
\end{align*}

There exists $\gamma>0$ depending on $d(p,q)$ such that for all $n$ sufficiently large and all $x_n\in T_{p_n}'$ with $d(x_n,p_n)<\gamma$, we have
\begin{itemize}
\item
$d(x_n,y_n)<2d(p,q)$;
\item
$8d(E_{p_n},E_{x_n}))<C_0d(p,q)^{\theta}$.
\end{itemize}

We deduce that if $d(x_n,p_n)<\gamma$ then $\|DG(x_n')\|<5C_0d(p,q)^{\theta}$, or $G$ is Lipschitz with constant $5C_0d(p,q)^{\theta}$. Then
$$
d(\sigma_{q_n}^{-1}\circ\overline h^n\circ\sigma_{p_n}(x_n'),p^{E_{p_n}}_{D_{p_n},D_{q_n}}(x_n'))=d(G(x_n'),G(0))\leq 5C_0d(p,q)^{\theta}d(x_n',0)\leq 10C_0d(p,q)^{\theta}d(x_n,p_n).
$$
and furthermore
\begin{eqnarray*}
\sup_{d(x_n,p_n)<\gamma}\frac{d(\overline h^n(x_n),g_n(x_n))}{d(x_n,p_n)}&\leq& Lip(\sigma_{q_n})\sup_{d(x_n,p_n)<\gamma}\frac{d(\sigma_{q_n}^{-1}\circ \overline h^n(x_n),\sigma_{q_n}^{-1}\circ g_n(x_n))}{d(x_n,p_n)}\\
&\leq&20C_0d(p,q)^{\theta}
\end{eqnarray*}

In other words $d(J(\overline h^n),J(g_n))\leq20C_0d(p,q)^{\theta}$ in $J^b(T_{p_n},p_n,T_{q_n},q_n)$ (in fact in $J^d(T_{p_n},p_n,T_{q_n},q_n)$). Since $\gamma$ is independent of $n$ this relation can be passed to the limit when $n$ goes to infinity and we get
$$
\sup_{d(x,p)<\gamma}\frac{d(h^u_{p,q}(x),g_{p,q}(x))}{d(x,p)}\leq 20C_0d(p,q)^{\theta},
$$
where $g_{p,q}=\sigma_q\circ p^{E^{su}(p)}_{E^c(p),E^c(q)}\circ\sigma_p^{-1}$. This means that $d(J(h^u_{p,q}),J(g_{p,q}))\leq 20C_0d(p,q)^{\theta}$, for all $p\in M$ and $q\in\cW^u_{\delta}(x)$.

\begin{remark}
$g_{p,q}$ is differentiable and the derivative is $p^{E^{su}(p)}_{E^c(p),E^c(q)}$. The bound obtained also works for the neighborhood $\mathcal U(f)$.
\end{remark}

\subsection{$h^u_{p,q}$ is differentiable.}
We will use the invariant section theorem. Let $q\in\cW^u_{\delta}(p)$, $q\neq p$. For simplicity let us denote $g_{f^n(p),f^n(q)}=g_n, p^{E^{su}(p)}_{E^c(p),E^c(q)}=\pi_n$. The base is $\mathbb Z$ with the discrete topology, and the base map is $T$, the translation by one. The fiber over $n$ is
$$
B_n=B(J(g_n), C_1d(f^n(p),f^n(q))^{\theta})\subset J^b(\cW^c_{\delta}(f^n(p)),f^n(p),\cW^c_{\delta}(f^n(q)),f^n(q))
$$
if $n\leq 0$, where $C_1=20C_0$. In particular we have $J(h^u_{f^n(p),f^n(q)})\in B_n$. Observe that since $C_1>C_0$ we have
\begin{equation}\label{eq:welldef}
\mu C_1+\frac{4\mu\|Df\|_{C^{\theta}}}{\|Df\|}<C_1.
\end{equation}

The subset $\mathbb Z^-=\mathbb Z\setminus\mathbb N$ is overflowed by $T$. The bundle map is
$$
F(n,J(h))=(n+1, J(f\circ h\circ f^{-1})).
$$

We claim that $F$ is well defined. For this we have to prove that if $d(J(h),J(g_n))\leq C_1d(f^n(p),f^n(q))$ then $d(J(f\circ h\circ f^{-1}),J(g_{n+1}))\leq C_1d(f^{n+1}(p),f^{n+1}(q))$. Observe that
$$
d(J(f\circ h\circ f^{-1}),J(g_{n+1}))\leq d(J(f\circ h\circ f^{-1}),J(f\circ g_n\circ f^{-1}))+d(J(f\circ g_n\circ f^{-1}),J(g_{n+1})).
$$
On one hand we have
\begin{align*}
d(J(f\circ h\circ f^{-1}),J(f\circ g_n\circ f^{-1}))&\leq Lip (f, f^{n}(q))\cdot d(J(h),J(g_n))\cdot Lip (f^{-1},f^{n+1}(p))\\
&\leq\frac{ \lambda^+_{\Delta^c_{2\epsilon_0}}(f,p,\delta_0)}{\lambda^-_{\Delta^c_{2\epsilon_0}}(f,p,\delta_0)\lambda^-_{\Delta^u_{2\epsilon_0}}(f,p,\delta_0)^{\theta}}C_1d(f^{n+1}(p),f^{n+1}(q))^{\theta}\\
&\leq\mu C_1d(f^{n+1}(p),f^{n+1}(q))^{\theta}.
\end{align*}
On another hand, since $g_n$ and $g_{n+1}$ are differentiable, we have
\begin{align*}
d(J(f\circ g_n\circ f^{-1}),J(g_{n+1}))&=\|D(f\circ g_n\circ f^{-1})-Dg_{n+1}\|\\
&=\|p^{E^{su}_{f^{n+1}(p)}}_{E^c_{f^{n+1}(q)}}\cdot(Df(f^n(q))-Df(f^n(p)))\cdot p^{E^{su}_{f^n(p)}}_{E^c_{f^n(q)}}\cdot Df^{-1}|_{E^c_{f^{n+1}(p)}}\|\\
&\leq\frac{4\mu\|Df\|_{C^{\theta}}}{\lambda^-_{\Delta^c_{2\epsilon_0}}(f,p,\delta_0)\lambda^-_{\Delta^u_{2\epsilon_0}}(f,p,\delta_0)^{\theta}}d(f^{n+1}(p),f^{n+1}(q))^{\theta}\\
&\leq\frac{4\mu\|Df\|_{C^{\theta}}}{\|Df\|}d(f^{n+1}(p),f^{n+1}(q))^{\theta}.
\end{align*}

The estimates above together with the condition \ref{eq:welldef} imply that $F$ is indeed well defined.

Next we modify the distance inside each fiber $B_n$, we let $d_n=\frac d{d(f^n(p),f^n(q))}$. Let $\Sigma^b$ be the space of sections over $\mathbb Z^-$, with the supremum distance $d_{\sup}=\sup_{n\in\mathbb Z^-}d_n$. It is clear that $(\Sigma^b,d_{\sup})$ is a complete metric space.
We claim that $F$ is a uniform bundle contraction over $\mathbb Z^-$.

Let $J(\sigma), J(\sigma')\in B_n$. Then
\begin{align*}
d_{n+1}(J(f\circ\sigma\circ f^{-1}),J(f\circ\sigma'\circ f^{-1}))&=\frac{d_{n+1}(J(f\circ\sigma\circ f^{-1}),J(f\circ\sigma'\circ f^{-1}))}{d(f^{n+1}(p),f^{n+1}(q))}\\
&\leq Lip (f, f^{n}(q))\cdot Lip (f^{-1},f^{n+1}(p))\cdot \frac{d(f^n(p),f^n(q))}{d(f^{n+1}(p),f^{n+1}(q))}\cdot \\
&\ \ \ \ \ \ \ \cdot \frac{d(J(\sigma),J(\sigma'))}{d(f^n(p),f^n(q))}\\
&\leq\frac{ \lambda^+_{\Delta^c_{2\epsilon_0}}(f,p,\delta_0)}{\lambda^-_{\Delta^c_{2\epsilon_0}}(f,p,\delta_0)\lambda^-_{\Delta^u_{2\epsilon_0}}(f,p,\delta_0)^{\theta}}d_n(\sigma,\sigma')\\
&\leq\mu d_n(\sigma,\sigma').
\end{align*}

This shows that $F$ induces a contraction on $\Sigma^b$, so there exists a unique invariant bounded section $\sigma(n)\in B_n$.

$B_n\cap J^d(\cW^c_{\delta}(f^n(p)),f^n(p),\cW^c_{\delta}(f^n(q)),f^n(q))$ is a closed nonempty subset of $B_n$,  so we can apply again the invariant section theorem to this closed sub-bundle, which is clearly preserved by $F$, and we get that the unique invariant section must contain actually differentiable jets at all points.

We can check that the jet of the holonomy is also an invariant bounded section of $F$. Uniqueness of the invariant section implies then that the holonomy is differentiable at every points $p,q\in\cW^u_{\delta}(p)$, and satisfies
$$
\|Dh^u_{p,q}(p)-p^{E^{su}_p}_{E^c_p,E^c_q}\|\leq C_1d(p,q)^{\theta}.
$$

\begin{remark}
We proved the differentiability of the unstable holonomy between (nearby) center leaves. However we can adapt the proof for any two transversals to $\cW^u$ inside a center-unstable leaf. A sketch of the proof is the following.

Let $T_p,T_q$ be two $C^1$ transversals to $\cW^u$ restricted to $\cW^{cu}(p)$, and denote $D_p$ and $D_q$ their tangent planes in $p,q$. Assume that $D_p,D_q\in\Delta^c_{\frac{\epsilon_0}4}$ and $d(p,q)<\delta/4$ (otherwise iterate back a finite number of times). Choose $\tilde\cW^s$ a smooth approximation of $\cW^s$ in a tubular neighborhood of $\cW^{cu}_{\delta}(p)$. The local $\tilde\cW^s$ holonomy takes $T_p,T_q$ to the $C^1$ transversals $T_p^n,T_q^n$ to $f_*^n\cF_n$ inside $f^n\cW_n$. If $n$ is sufficiently large, $f^n\cW_n$ is close to $\cW^{cu}(p)$, and the local $\tilde\cW^s$ holonomy takes $D_p,D_q$ to subspaces $D_p^n,D_q^n$ inside $\Delta^c_{\frac{\epsilon_0}2}$. We do have again the uniform control of the regularity of the $f_*^n\cF_n$ holonomy between $T_p^n$ and $T_q^n$, so we can pass it to the limit as before and obtain that the unstable holonomy between $T_p$ and $T_q$ is differentiable, with
$$
\|Dh^u_{T_p,T_q}(p)-p^{E^{su}_p}_{D_p,D_q}\|\leq C_1d(p,q)^{\theta}.
$$

In other words $Dh^u$ is ($C_1,\theta$)-H\"older along $\cW^u$ at $p$ with respect to $E^{su}_p,\Delta^c_{\frac{\epsilon_0}4}$ and at scale $\delta/4$ for all $p\in M$. The result holds for the neighborhood $\mathcal U(f)$.
\end{remark}

\subsection{$Dh^u_{p,q}$ is continuous in $p,q,f$.}\label{ss:final}

We will apply again the invariant section theorem in yet another space. First let us refine the bunching bound from \ref{eq:bunch}. Choose $\mu<\mu'<1$. Since $E^c$ is uniformly $C^{\alpha}$ in a neighborhood of $f$, there exists $0<\delta'<\delta$ such that for all $g\in\mathcal U(f), p,q\in M, d(p,q)\leq\delta'$, we have 
$$
\|p^{E^{su}_{p,g}}_{E^c_{p,g},E^c_{q,g}}\|, \|p^{E^{su}_{p,g}}_{E^c_{q,g},E^c_{p,g}}\|< \sqrt{\frac{\mu'}{\mu}}.
$$

The base space is $N=M^2\times\mathcal U(f)$, with the $C^1$ topology on $\mathcal U(f)$. The base map is $G(p,q,g)=(g(p),g(q),g)$, which is continuous. At each $(p,q,g)\in N$ we consider the fiber $\mathcal E_{p,q,g}=\mathcal L(E^c_{p,g})$, the linear maps from $E^c_{p,g}$ to itself, with the usual norm given by the Riemannian metric. Since the center bundle is continuous with respect to the point and the map, we obtain a continuous Banach bundle $\mathcal E$ over $N$. Let $N'=\{(p,q,g)\in N:\ q\in\cW^u_{\delta'}(p,g)\}$. Clearly $N'$ is overflowed by $G$.

Let
$$
\|\sigma\|_b=\sup_{(p,q,g)\in N'}\frac{\|\sigma(p,q,g)\|}{d(p,q)^{\theta}}.
$$
and let $\Sigma^b$ the space of sections in $\mathcal E$ over $N'$ bounded in $\|\cdot\|_b$ (in particular $\sigma\in\Sigma^b$ implies $\sigma(p,p,g)=0$). This is a complete metric space. $\Sigma^c\cap\Sigma^b$ is the space of the sections which are both continuous and bounded in $\|\cdot\|_b$, this is a closed nonempty subset of $\Sigma^b$ (it contains the zero section).

The bundle map is
$$
(T\sigma)(g(p),g(q),g)=p^{E^{su}_{g(p),g}}_{E^c_{g(q),g}E^c_{g(p),g}}\circ Dg(q)|_{E^c_{q,g}}\circ p^{E^{su}_{p,g}}_{E^c_{p,g},E^c_{q,g}}\circ(Id+\sigma(p,q,g))\circ Dg(p)|_{E^c_{p,g}}^{-1}-Id.
$$
This is continuous in $p,q,g,\sigma$.

The connection with the holonomies is the following. If
$$
Id+\sigma(p,q,g)=p^{E^{su}_{p,g}}_{E^c_{q,g},E^c_{p,g}}\circ H^u_{p,q,g},
$$
where $H^u_{p,q,g}:E^c_{p,g}\rightarrow E^c_{q,g}$ is the candidate for the derivative of the holonomy, then
$$
Id+(T\sigma)(g(p),g(q),g)=p^{E^{su}_{g(p),g}}_{E^c_{g(q),g},E^c_{g(p),g}}\circ g_*H^u_{p,q,g}=p^{E^{su}_{g(p),g}}_{E^c_{g(q),g},E^c_{g(p),g}}\circ Dg(q)|_{E^c_{q,g}}\circ H^u_{p,q,g}\circ Dg(p)|_{E^c_{p,g}}^{-1}
.$$

Let us check that $T$ applied to the zero section is in $\Sigma^b$. We remark first that
$$
Id=p^{E^{su}_{g(p),g}}_{E^c_{g(q),g}E^c_{g(p),g}}\circ Dg(p)|_{E^c_{q,g}}\circ p^{E^{su}_{p,g}}_{E^c_{p,g},E^c_{q,g}}\circ Dg(p)|_{E^c_{p,g}}^{-1}.
$$
Then
\begin{align*}
\|T0\|_b&=\sup_{(g(p),g(q),g)\in N'}\frac{\|p^{E^{su}_{g(p),g}}_{E^c_{g(q),g},E^c_{g(p),g}}\circ Dg(q)|_{E^c_{q,g}}\circ p^{E^{su}_{p,g}}_{E^c_{p,g},E^c_{q,g}}\circ Dg(p)|_{E^c_{p,g}}^{-1}-Id\|}{d(g(p),g(q))^{\theta}}\\
&\leq\sup_{(p,q,g)\in N'}\frac{\|p^{E^{su}_{g(p),g}}_{E^c_{g(p),g}}\circ (Dg(q)-Dg(p))|_{E^c_{q,g}}\circ p^{E^{su}_{p,g}}_{E^c_{p,g},E^c_{q,g}}\circ Dg(p)|_{E^c_{p,g}}^{-1}\|}{d(g(p),g(q))^{\theta}}\\
&\leq\sup_{(p,q,g)\in N'}\frac{4\|Dg\|_{C^{\theta}}}{\lambda^-_{\Delta^c_{2\epsilon_0}}(g,p,\delta')\lambda^-_{\Delta^u_{2\epsilon_0}}(g,p,\delta')^{\theta}}\\
&\leq\frac{4\mu\|Dg\|_{C^{\theta}}}{\|Dg\|}.
\end{align*}

Now let us check that $T$ is a contraction in $\Sigma^b$.

\begin{align*}
\|T\sigma_1-T\sigma_2\|_b&=\sup_{N'}\frac{\|p^{E^{su}_{g(p),g}}_{E^c_{g(q),g},E^c_{g(p),g}}\circ Dg(q)|_{E^c_{q,g}}\circ p^{E^{su}_{p,g}}_{E^c_{p,g},E^c_{q,g}}\circ(\sigma_1-\sigma_2)(p,q,g))\circ Dg(p)|_{E^c_{p,g}}^{-1}\|}{d(g(p),g(q))^{\theta}}\\
&\leq\frac{\mu' \lambda^+_{\Delta^c_{2\epsilon_0}}(g,p,\delta')}{\mu \lambda^-_{\Delta^c_{2\epsilon_0}}(g,p,\delta')\lambda^-_{\Delta^u_{2\epsilon_0}}(g,p,\delta')^{\theta}}\|\sigma_1-\sigma_2\|_b\\
&\leq\mu'\|\sigma_1-\sigma_2\|_b.
\end{align*}

Since $\Sigma^b$ is a complete metric space, we obtain that there is a unique invariant section in $\Sigma^b$. Continuous sections are preserved by $T$, so we can also apply the Banach fixed point Theorem in $\Sigma^b\cap\Sigma^d$, and we obtain that the unique invariant section in $\Sigma^d$ is in fact continuous. On the other hand the section
$$
\sigma^u(p,q,g)=p^{E^{su}_{p,g}}_{E^c_{q,g},E^c_{p,g}}\circ Dh^u_{p,q,g}-Id
$$
is an invariant section of $T$ inside $\Sigma^b$, so it must be the unique invariant section. Since $p^{E^{su}_{p,g}}_{E^c_{q,g},E^c_{p,g}}$ is continuous in $p,q,g$, we obtain that $Dh^u$ is also continuous in $p,q,g$, which finishes the proof of Theorem \ref{th:t1}.

\begin{remark}
If we consider the restriction to the base space $M^2\times\{f\}$, then we have a H\"older map in a H\"older bundle, so the invariant section theorem will provide us with a H\"older continuous invariant section, which means that $Dh^u_{p,q,f}$ is actually H\"older in $p,q$.
\end{remark}

\subsection{Proof of Corollary \ref{cor}}

The proof is similar to the proof of Theorem \ref{th:t1}. The space is not compact (it is a disjoint union of $\mathbb R^d$), but the bounds are uniform. The invariant foliations are globally defined graphs so in this case the approximation of the pair ($\cW^{cu},\cW^u$) is actually much easier. We can take $\tilde\cW^{cu}$ to be the $cu$-subspace passing through the origin, and the subfoliation $\cF$ to be the subfoliation by $u$-subspaces. For more details on fake foliations we send the reader to \cite{BW10}.

\subsection{Proof of Theorem \ref{th:t2}}

The proof is actually contained in the section \ref{ss:final}. Even if we don't know that there exists a (differentiable) holonomy between center leaves, we still obtain a continuous invariant section $\sigma^u$ of $T$, and then $H^u_{p,q,g}=p^{E^{su}_{p,g}}_{E^c_{p,g},E^c_{q,g}}\circ(\sigma^u_{p,q,g}+Id)$ is the invariant continuous holonomy we are looking for, at least at the scale $\delta'$. In order to define it for all $q\in\cW^u(p)$ we iterate forward and use invariance under $f$. Doing this we have automatically the invariance under $f$ and the continuity with respect to the points. To prove that $H^u_{q,r}\circ H^u_{p,q}=H^u_{p,r}$ we can use the uniqueness of the invariant section. If the relation does not hold, we can modify the invariant section $\sigma$ along the orbit of $(p,r)$, replacing it with the $\sigma'$ corresponding to $H^u_{q,r}\circ H^u_{p,q}$. Then the invariant section $\sigma$ is not unique, which is a contradiction.


\begin{thebibliography}{10}

\bibitem{ASV13}
{\sc A. Avila, J. Santamaria and M. Viana}, {\it Holonomy invariance: rough regularity and applications to Lyapunov exponents}. Ast\'{e}risque {\bf 358} (2013), 13--74.

\bibitem{Br22}
{\sc A. Brown}, {\it Smoothness of stable holonomies inside center-stable manifolds}. Erg. Th. Dynam. Syst. {\bf 42} (2022), no. 12, 3593--3618.

\bibitem{BW05}
{\sc K. Burns and A. Wilkinson}, {\it A note on stable holonomy between centers. (preliminary version.)} Notes on A. Wilkinson's personal webpage, 2005.

\bibitem{BW10}
{\sc K. Burns and A. Wilkinson}, {\it On the ergodicity of partially hyperbolic systems.} Annals of Mathematics (2) {\bf 171} (2010), no. 1, 451--489.

\bibitem{COP22}
{\sc S. Crovisier, D. Obata and M. Poletti}, {\it Uniqueness of $u$-Gibbs measures for hyperbolic skew products on $\mathbb T^4$}. preprint arXiv:2209.09151

\bibitem{HPS77}
{\sc M. Hirsch, C. Pugh and M. Shub}, {\it Invariant Manifolds.} Springer Lecture Notes in Mathematics, 583 (1977).

\bibitem{KS13}
{\sc B. Kalinin and V. Sadovskaya}, {\it Cocycles with one exponent over partially hyperbolic systems}. Geom. Dedicata {\bf 167} (2013), 167-188.

\bibitem{LP21}
{\sc M. Leguil and L. Pi\~neyr\'ua}, {\it Accesibility for dynamically coherent partially hyperbolic diffeomorphisms with 2D center}. preprint arXiv:2112.12762

\bibitem{LMY18}
{\sc C. Liang, K. Marin, and J. Yang}, {\it Lyapunov exponents of partially hyperbolic volume preserving maps with 2-dimensional center bundle}. Ann. Inst. H. Poincar\'e Anal. Non Lin\'eaire, {\bf 35} (2018), no. 6,  1687--1706.

\bibitem{LMY19}
{\sc C. Liang, K. Marin and J. Yang}, {\it $C^1$-openness of non-uniform hyperbolic diffeomorphisms with bounded $C^2$ norm}. Erg. Th Dynam. Syst., {\bf 40} (2020), no. 11, 3078--3104.

\bibitem{LSYY}
{\sc C. Liang, R. Saghin, F. Yang and J. Yang}, {\it Shub's example revisited}. preprint arXiv:2303.17775

\bibitem{Ma16}
{\sc K. Marin}, {\it $C^r$-density of (non-uniform) hyperbolicity in partially hyperbolic symplectic diffeomorphisms}. Comment. Math. Helv. {\bf 91} (2016), no. 2, 357--396.

\bibitem{Ob18}
{\sc D. Obata}, {\it On the holonomies of strong stable foliations}. Notes on D. Obata's personal webpage, 2018.

\bibitem{Ob21}
{\sc D. Obata}, {\it Open sets of partially hyperbolic skew products having a unique SRB measure}. preprint arXiv:2107.12916v2 .

\bibitem{PSW04}
{\sc C. Pugh, M. Shub and A. Wilkinson}, {\it Partial differentiability of invariant splittings.} Journal of Statistical Physics {\bf 114} (2004), no. 3-4, 891--921.

\bibitem{RRU08}
{\sc F. Rodriguez Hertz, M. A. Rodriguez Hertz, and R. Ures}, {\it Accessibility and stable ergodicity for partially hyperbolic diffeomorphisms with 1D-center bundle}. Invent. Math., {\bf 172} no. 2 (2008), 353--381.




\end{thebibliography}
\end{document}